\DeclareMathAlphabet{\mathpzc}{OT1}{pzc}{m}{it}
\title{Bott vanishing for algebraic surfaces}
\author{Burt Totaro}
\date{  }
\def\Z{\text{\bf Z}}
\def\C{\text{\bf C}}
\def\P{\text{\bf P}}
\def\arrow{\rightarrow}
\DeclareMathOperator{\Aut}{Aut}
\DeclareMathOperator{\ch}{ch}
\DeclareMathOperator{\td}{td}
\DeclareMathOperator{\Pic}{Pic}
\DeclareMathOperator{\corank}{corank}
\DeclareMathOperator{\Cliff}{Cliff}
\newcommand{\Mod}[1]{\ (\mathrm{mod}\ #1)}
\def\p{\mathcal{P}}
\def\m{\mathcal{M}}
\begin{document}
\maketitle
\newtheorem{theorem}{Theorem}[section]
\newtheorem{corollary}[theorem]{Corollary}
\newtheorem{lemma}[theorem]{Lemma}

\theoremstyle{definition}
\newtheorem{definition}[theorem]{Definition}
\newtheorem{example}[theorem]{Example}

\theoremstyle{remark}
\newtheorem{remark}[theorem]{Remark}

\begin{center}
{\it For William Fulton on his eightieth birthday}
\end{center}

A smooth projective variety $X$ over a field is said to satisfy
{\it Bott vanishing }if
$$H^j(X,\Omega^i_X\otimes L)=0$$
for all ample line bundles $L$, all $i\geq 0$, and all $j>0$. Bott proved
this when $X$ is projective space.
Danilov and Steenbrink extended Bott vanishing to
all smooth projective toric varieties; proofs can be found
in \cite{BC, BTLM, Mustata, Fujino}.

What does Bott vanishing mean? It does not have a clear
geometric interpretation in terms of the classification
of algebraic varieties. But it is useful when it holds,
as a sort of preprocessing step, since
the vanishing of higher cohomology lets us compute
the spaces of sections of various important vector bundles.
Bott vanishing includes Kodaira vanishing
as a special case (where $i$ equals $n:=\dim X$), but it says much more.

For example, any Fano variety
that satisfies Bott vanishing
must be rigid, since $H^1(X,TX)=H^1(X,\Omega^{n-1}_X
\otimes K_X^*)=0$ for $X$ Fano.
So Bott vanishing holds for only finitely many
smooth complex Fano varieties in each dimension. Even among rigid Fano
varieties, Bott vanishing fails for quadrics of dimension at least 3
and for Grassmannians other than projective space \cite[section 4]{BTLM}.
As a result, Achinger, Witaszek,
and Zdanowicz asked whether a rationally connected variety
that satisfies Bott vanishing must be a toric variety
\cite[after Theorem 4]{AWZ}.

In this paper, we exhibit several new classes of varieties
that satisfy Bott vanishing.
First, we answer Achinger-Witaszek-Zdanowicz's question:
there are non-toric rationally connected varieties
that satisfy Bott vanishing, since {\it Bott vanishing holds
for the quintic del Pezzo surface }(Theorem \ref{quintic}).
Over an algebraically closed field,
a quintic del Pezzo surface is
isomorphic to the moduli space $\overline{M_{0,5}}$ of 5-pointed
stable curves of genus zero. It is the only rigid del Pezzo surface
that is not toric: del Pezzo surfaces of degree at least 5 are rigid,
and those of degree at least 6 are toric.
(The quintic del Pezzo surface
also does not have a lift of the Frobenius endomorphism
from $\Z/p$ to $\Z/p^2$, a property known to imply
Bott vanishing \cite{BTLM}, \cite[Proposition 7.1.4]{AWZ}.)
In view of this example, there is a good hope
of finding more Fano or rationally connected varieties that satisfy
Bott vanishing.

We also consider varieties that are not
rationally connected, with most of the paper devoted
to K3 surfaces. Bott vanishing holds for abelian varieties
over any field: it reduces to Kodaira vanishing, since the tangent
bundle is trivial. On the other hand, Riemann-Roch shows that
Bott vanishing fails for all K3 surfaces of degree less than 20
(Theorem \ref{under20}).
But recent work
of Ciliberto-Dedieu-Sernesi and Feyzbakhsh \cite{CDS, Feyzbakhsh} implies:
{\it Bott vanishing holds
for all K3 surfaces of degree 20
or at least 24 with Picard number 1} (Theorem \ref{lowdegree20}).
Version 2 of this paper on the arXiv gave a more elementary
proof, not using Feyzbakhsh's work on Mukai's program (reconstructing
a K3 surface from a curve),
but here we give a short proof using her work.
Surprisingly, Bott vanishing
fails in degree 22.

More strongly, we end up with a clear geometric understanding
of the meaning of Bott vanishing
for a K3 surface with any Picard number; see Theorems \ref{lowdegree74},
\ref{degree1}, and \ref{degree4}.
The key question is whether
$H^1(X,\Omega^1_X\otimes B)$ is zero for an ample line bundle $B$.
This cohomology group has a direct geometric meaning, related
to the map from the moduli space of curves on K3 surfaces
to the moduli space of curves (section \ref{bott20}).

Roughly speaking,
the failure of this vanishing for a K3 surface is caused
{\it either }by elliptic curves of low degree on the surface,
{\it or }by the existence of a (possibly singular) Fano 3-fold
in which the K3 surface is a hyperplane section.
The proofs build on a long development, starting with
the work of Beauville, Mori, and Mukai
about moduli spaces of K3 surfaces, and leading up to recent advances
by Arbarello-Bruno-Sernesi and Ciliberto-Dedieu-Sernesi
\cite{Beauville, MM, Mukai, ABShyperplane, CDS}. We give a complete
description of all K3 surfaces $X$
with an ample line bundle $B$ of high degree
such that
$H^1(X,\Omega^1_X\otimes B)$ is not zero. The most novel aspect
of the paper is our analysis of what happens when there is an elliptic curve
of low degree (Theorem \ref{degree4}). (In other terminology,
this concerns K3 surfaces that are unigonal, hyperelliptic,
trigonal, or tetragonal.) It turns out that
the crucial issue is whether an elliptic fibration has a certain special
type of singular fiber.

I thank Ben Bakker, John Ottem, Zhiyu Tian, and a referee
for proposing important steps in the paper.
I also thank Valery Alexeev,
Enrico Arbarello, William Baker, Daniel Huybrechts,
Emanuele Macr\`i, Scott Nollet,
Kieran O'Grady, and Mihnea Popa
for their suggestions.
This work was supported by National Science Foundation
grant DMS-1701237, and by grant DMS-1440140
while the author was in residence at the
Mathematical Sciences Research Institute in Berkeley, California, during the
Spring 2019 semester.

\section{Notation}

We take a {\it variety} over a field $k$ to mean an integral separated
scheme of finite type over $k$. A {\it curve }means a variety
of dimension 1. So, in particular, a curve is irreducible.
A property is said to hold
for {\it general }(resp.\ {\it very general}) complex points of a variety $Y$
if it holds outside a finite (resp.\ countable) union of closed subvarieties
not equal to $Y$.

On a smooth variety, we often identify line bundles
with divisors modulo linear equivalence.
For example, the tensor product $A\otimes B$
of two line bundles may also be written as $A+B$.
A line bundle is {\it primitive }if it cannot be written as
a positive integer at least 2 times some line bundle.

\section{Bott vanishing for the quintic del Pezzo surface}

\begin{theorem}
\label{quintic}
Let $X$ be a del Pezzo surface of degree 5 over a field $k$.
Then $X$ satisfies Bott vanishing, but is not toric.
\end{theorem}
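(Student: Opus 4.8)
The plan is to treat the three values $i=0,1,2$ separately and reduce everything to a single vanishing statement about $\Omega^1_X$. Since $X$ is del Pezzo, $-K_X$ is ample, so for any ample $L$ both $L$ and $L-K_X$ are ample. For $i=2$ we have $\Omega^2_X\otimes L=K_X+L$, and $H^j(X,K_X+L)=0$ for $j>0$ is Kodaira vanishing. For $i=0$, writing $L=K_X+(L-K_X)$ gives $H^1(X,L)=0$ by Kodaira vanishing, while $H^2(X,L)\cong H^0(X,K_X-L)^*=0$ because $(K_X-L)\cdot(-K_X)<0$, so $K_X-L$ has no sections. Kodaira vanishing is available in every characteristic here, since a del Pezzo surface is log Fano, hence globally $F$-regular. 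This leaves the case $i=1$: I must show $H^1(X,\Omega^1_X\otimes L)=0$ and $H^2(X,\Omega^1_X\otimes L)=0$ for every ample $L$.

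The group $H^2$ is the easy half. By Serre duality $H^2(X,\Omega^1_X\otimes L)\cong H^0(X,T_X\otimes\mathcal{O}(K_X-L))^*$, so it suffices to rule out a nonzero map $\mathcal{O}(L-K_X)\to T_X$. The saturation of the image of such a map would be a sub-line-bundle $A\subseteq T_X$ with $A\cdot(-K_X)\geq (L-K_X)\cdot(-K_X)=L\cdot(-K_X)+5\geq 6$. But the tangent bundle of the quintic del Pezzo surface is $\mu$-semistable with respect to $-K_X$, so any sub-line-bundle satisfies $A\cdot(-K_X)\leq \tfrac12(-K_X)^2=\tfrac52$, a contradiction. (Alternatively, one restricts a putative section to the ten $(-1)$-curves, on each of which $T_X\cong\mathcal{O}(2)\oplus\mathcal{O}(-1)$.)

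The heart of the proof is $H^1(X,\Omega^1_X\otimes L)=0$, which I would attack by peeling off $(-1)$-curves. For a $(-1)$-curve $C\cong\P^1$ the conormal sequence gives $\Omega^1_X|_C\cong\mathcal{O}(1)\oplus\mathcal{O}(-2)$, hence $\Omega^1_X(D)|_C\cong\mathcal{O}(1+D\cdot C)\oplus\mathcal{O}(-2+D\cdot C)$, whose $H^1$ vanishes as soon as $D\cdot C\geq 1$. Tensoring $0\to\mathcal{O}_X(-C)\to\mathcal{O}_X\to\mathcal{O}_C\to 0$ with $\Omega^1_X(D)$ then produces a surjection $H^1(X,\Omega^1_X(D-C))\twoheadrightarrow H^1(X,\Omega^1_X(D))$ whenever $D\cdot C\geq 1$; note that such a step drops the anticanonical degree $D\cdot(-K_X)$ by exactly $1$, since $(-K_X)\cdot C=1$. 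Starting from an ample $L$ and repeatedly subtracting $(-1)$-curves $C$ with (current class)$\,\cdot C\geq 1$, I reduce the desired vanishing to a finite list of base classes, on each of which $H^1(X,\Omega^1_X(D))=0$ is checked directly by computing $\chi(\Omega^1_X(D))$ via Riemann--Roch and verifying $H^0=H^2=0$ as above.

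The main obstacle is the bookkeeping in this descent. The surjections only let me conclude vanishing for $L$ once I reach a base class that genuinely has vanishing $H^1$; in particular I must avoid terminating at $\mathcal{O}_X$, where $H^1(X,\Omega^1_X)$ has dimension $\rho(X)=5\neq 0$. Since I only need \emph{one} admissible descent path for each ample $L$, I would organize the argument using the $S_5$-symmetry and the Petersen-graph combinatorics of the ten $(-1)$-curves, showing that every ample class is connected by admissible subtractions to a vanishing base class. A pleasant feature is that all the inputs---the splitting of $\Omega^1_X|_C$, Riemann--Roch, and the semistability of $T_X$---are characteristic-independent, so the argument works over any field, as required (and consistently with the fact that $X$ carries no lift of Frobenius). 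Finally, $X$ is not toric: every smooth complete toric surface admits a faithful action of the torus $\mathbb{G}_m^2$ and hence has positive-dimensional automorphism group, whereas the automorphism group of the quintic del Pezzo surface is finite, isomorphic to $S_5$.
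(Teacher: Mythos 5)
Your reductions for $i=0$ and $i=2$, the splitting $\Omega^1_X|_C\cong\mathcal{O}(1)\oplus\mathcal{O}(-2)$ on a $(-1)$-curve, and the resulting surjection $H^1(X,\Omega^1_X(D-C))\twoheadrightarrow H^1(X,\Omega^1_X(D))$ whenever $D\cdot C\geq 1$ are all correct. But the heart of your argument --- the descent --- has a genuine gap, and as organized it cannot close. Riemann--Roch gives $\chi(X,\Omega^1_X(D))=D^2-5$, so your proposed base-case check (``$\chi\geq 0$ and $H^0=H^2=0$'') can only ever certify classes with $D^2=5$; by the Hodge index theorem the only such class in the nef cone is $-K_X$ (and on the other side essentially $K_X$). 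Now look at the \emph{last} step of any descent terminating at $-K_X$: the preceding class is $-K_X+C$ for some $(-1)$-curve $C$, and $(-K_X+C)\cdot C=1-1=0$, so the admissibility condition $D\cdot C\geq 1$ fails at exactly the step you cannot avoid. (The same computation kills termination at $K_X$.) So the combinatorial claim you defer to the Petersen-graph bookkeeping --- that every ample class is connected by admissible subtractions to a certified base class --- is not merely unproven; with the base classes your verification method allows, it is false. One could try to enlarge the list of base classes to divisors with $h^0(\Omega^1_X(D))>0$, but then you must compute $h^0$ and $h^2$ exactly and check $h^0+h^2=D^2-5$, which is a different (and unperformed) argument. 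Smaller issues: the stability of $T_X$ used for $H^2(X,\Omega^1_X\otimes L)$ is quoted without proof and its validity in positive characteristic would need care, and the fallback of restricting a section of $T_X(K_X-L)$ to a single $(-1)$-curve does not immediately kill it, since the $\mathcal{O}(2)$ summand can survive a twist by degree $-2$.

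The paper's proof shows what is missing: a termination mechanism that does not rely on $\P^1$'s. It writes $L=a(-K_X)+M$ with $M$ nef of degree $0$ on some $(-1)$-curve $E$, contracts $E$ to the toric sextic del Pezzo surface $Y$, imports Bott vanishing from $Y$ to get $H^1(X,T_X\otimes\pi^*M)=0$ (using that $T_Y\otimes M$ is globally generated at the blown-up point), and then increases $a$ by induction by restricting to a smooth \emph{elliptic} curve $C\in|-K_X|$: there $T_X\otimes(K_X^*)^{\otimes a}\otimes\pi^*M$ is an extension of two line bundles of positive degree on a genus-$1$ curve, so its $H^1$ vanishes with no lower bound of the form ``degree $\geq -1$ on each summand.'' That is precisely the flexibility your $(-1)$-curve restrictions lack at the bottom of the descent. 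If you want to salvage your strategy, you would need either to restrict to the anticanonical elliptic pencil as the paper does, or to certify base classes by an honest computation of $h^0(\Omega^1_X(D))$ rather than by forcing it to vanish. Your non-toricity argument is fine once you justify finiteness of $\Aut(X)$ (e.g., via the description of $X$ as the blow-up of $\P^2$ at four points in general position); the identification with $S_5$ is not needed.
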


\begin{proof}
It suffices to prove the theorem after extending $k$,
and so we can assume that $k$ is algebraically closed.
In this case, there is a unique del Pezzo surface $X$
(a smooth projective surface with ample anticanonical bundle $K_X^*$)
of degree 5 over $k$, up to isomorphism. It can be described
as the blow-up of $\P^2$ at any set of 4 points with no three
on a line \cite[Remark 24.4.1]{Manin}.
Here $X$ has finite automorphism group, because
any automorphism of $X$ in the identity component of $\Aut(X)$
would pass to an automorphism of $\P^2$ (that is, an element
of $PGL(3,k)$) that fixes the 4 chosen points, and such
an automorphism must be the identity. In particular, $X$
is not a toric variety. (In fact, the automorphism group of $X$
is the symmetric group $S_5$, but we will not use that.)

The Picard group of $X$ is isomorphic to $\Z^5$, and so Bott
vanishing must be checked for a fairly large (infinite) class
of ample line bundles. We argue as follows. Recall the
Kodaira-Akizuki-Nakano vanishing theorem \cite[Theorem 4.2.3]{Lazarsfeld},
\cite{DI}:

\begin{theorem}
\label{kan}
(1) Every smooth projective variety over a field of characteristic
zero satisfies {\it Kodaira-Akizuki-Nakano vanishing}:
$$H^j(X,\Omega^i\otimes L)=0$$
for all ample line bundles $L$ and all $i+j>\dim(X)$.

(2) Let $X$ be a smooth projective variety over a perfect
field of characteristic $p>0$. If $X$ lifts to $W_2(k)$
and $X$ has dimension $\leq p$, then $X$ satisfies KAN vanishing
(as in (1)).
\end{theorem}

It follows that the quintic del Pezzo surface $X$ satisfies
KAN vanishing: the hypotheses of (2) hold if $k$
has characteristic $p$.
Thus we know that $H^j(X,\Omega^2\otimes L)=0$ 
for all ample
line bundles $L$ and all $j>0$. Since $K_X^*=(\Omega^2_X)^*$ is ample,
it follows that $H^j(X,L)=0$ for ample $L$ and $j>0$. Also by KAN vanishing,
we have $H^2(X,\Omega^1\otimes L)=0$ for ample $L$. To prove
Bott vanishing, it remains
to show that $H^1(X,\Omega^1\otimes L)=0$ for ample $L$.

For any del Pezzo surface $X$ of degree at most 7,
the cone of curves is spanned by the finitely many lines in $X$
(or equivalently, $(-1)$-curves, meaning curves $C$ in $X$ isomorphic
to $\P^1$ with $C^2=-1$; then $(-K_X)\cdot C = 1$)
\cite[section 6.5]{Debarre}.
Therefore, a line bundle $L$ on $X$ is nef if and only if
it has nonnegative degree on all $(-1)$-curves in $X$, and it is ample
if and only if it has positive degree on all $(-1)$-curves in $X$.

We return to the del Pezzo surface $X$ of degree 5 (in which
case there are 10 $(-1)$-curves, shown in Figure \ref{petersen}).
Let $L$ be any ample
line bundle on $X$, and let $a$ be the minimum degree of $L$
on the $(-1)$-curves, which is a positive integer.
Since $-K_X$ has degree 1 on
each $(-1)$-curve, $L$ can be written
(using additive notation for line bundles) as
$$L=a(-K_X)+M$$
for some nef line bundle $M$ on $X$ which has degree zero
on some $(-1)$-curve.

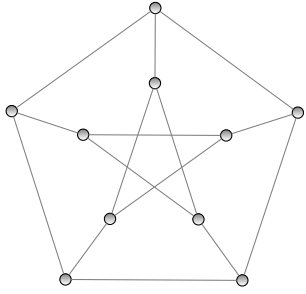
\begin{figure}\centering
\begin{tikzpicture}[yscale=0.5,xscale=0.5,rotate=17.7]
\begin{scope} [vertex style/.style={draw,
                                       circle,
                                       minimum size=1.5mm,
                                       inner sep=0pt,
                                       outer sep=0pt,
                                       shade}] 
      \path \foreach \i in {0,...,4}{%
       (72*\i:2) coordinate[vertex style] (a\i)
       (72*\i:4) coordinate[vertex style] (b\i)}
       ; 
    \end{scope}

     \begin{scope} [edge style/.style={draw=gray}]
       \foreach \i  in {0,...,4}{%
       \pgfmathtruncatemacro{\nextb}{mod(\i+1,5)}
       \pgfmathtruncatemacro{\nexta}{mod(\i+2,5)} 
       \draw[edge style] (a\i)--(b\i);
       \draw[edge style] (a\i)--(a\nexta);
       \draw[edge style] (b\i)--(b\nextb);
       }  
     \end{scope}
  \end{tikzpicture}
\caption{Dual graph of the 10 $(-1)$-curves on the quintic del Pezzo surface}
\label{petersen}
\end{figure}

Choose a $(-1)$-curve $E$ on which $M$ has degree zero,
and let $Y$ be the smooth projective surface obtained
by contracting $E$ (by Castelnuovo's contraction theorem).
Then $Y$ is a del Pezzo surface of degree 6, and such a surface
is toric. Since $M$ has degree 0 on $E$, the isomorphism
$\Pic(X)=\Pic(Y)\oplus \Z$ for a blow-up implies that
$M$ is pulled back from a line bundle on $Y$, which we also call $M$.
Clearly $M$ is nef on $Y$. By Bott vanishing on $Y$, we have
$$H^1(Y,\Omega^1\otimes K_Y^*\otimes M)=0,$$
using that $K_Y^*$ is ample. Here $\Omega^1_Y\otimes K_Y^*\cong 
TY$ (as on any surface), and so
$$H^1(Y,TY\otimes M)=0.$$

For any blow-up $\pi\colon X\arrow Y$ of a point $y$ on a smooth surface $Y$,
we have $R\pi_*(TX)=\pi_*(TX)=TY\otimes I_{y/Y},$ where $I_{y/Y}$ is the
ideal sheaf of $y$ in $Y$. (That is, vector fields on $X$ are equivalent
to vector fields on $Y$ that vanish at $y$.) We have an exact
sequence of coherent sheaves on $Y$,
$$0\arrow I_{y/Y}\arrow O_Y\arrow O_y\arrow 0.$$
Tensoring with the vector bundle $TY\otimes M$ gives another exact sequence,
$$0\arrow TY\otimes M\otimes I_{y/Y}\arrow TY\otimes M\arrow 
(TY\otimes M)|_y\arrow 0.$$
Combining this with the isomorphism above gives a long
exact sequence of cohomology:
$$H^0(Y,TY\otimes M)\arrow (TY\otimes M)|_y\arrow
H^1(X,TX\otimes \pi^*(M))\arrow H^1(Y,TY\otimes M).$$
Here $H^1(Y,TY\otimes M)=0$ by Bott vanishing as above.
Therefore, to show that $H^1(X,TX\otimes\pi^*(M))=0$,
it suffices to show that the rank-2 vector bundle $TY\otimes M$
is spanned at the point $y$ by its global sections. This follows
if we can show that $TY$ and $M$ are spanned at $y$ by their
global sections. For $TY$, this is clear by the vector fields
coming from the action of the torus $T=(G_m)^2$
on $Y$, since $y$ must be in the open $T$-orbit. (The blow-up of $Y$
at a point not in the open $T$-orbit would contain a $(-2)$-curve
and hence could not be a del Pezzo surface.) Also, every nef line
bundle $M$ on a toric variety $Y$ is basepoint-free
\cite[section 3.4]{Fulton}.
Thus we have shown that $H^1(X,TX\otimes \pi^*(M))=0$.

To prove Bott vanishing for $X$, as discussed above, we have
to show that $H^1(X,\Omega^1\otimes (K_X^*)^{\otimes a}\otimes
\pi^*(M))=0$ for all positive integers $a$. Equivalently,
we want $H^1(X,TX\otimes (K_X^*)^{\otimes a-1}\otimes \pi^*(M))=0$
for all positive integers $a$. We have proved this for $a=1$.
By induction, suppose we know this statement for $a$, and then
we will show that $H^1(X,TX\otimes (K_X^*)^{\otimes a}\otimes \pi^*(M))=0$.

On $X$ (as on any del Pezzo surface of degree at least 3),
the line bundle $K_X^*$ is very ample, and so it has a section
whose zero locus is a smooth curve $C$. By the adjunction formula,
$K_C$ is trivial; that is, $C$ has genus 1. We have an exact sequence
$$0\arrow O_X(-C)\arrow O_X\arrow O_C\arrow 0$$
of coherent sheaves on $X$, where $O(-C)\cong K_X$.
Tensoring with the vector bundle
$TX\otimes (K_X^*)^{\otimes a}\otimes\pi^*(M)$ gives another
exact sequence of sheaves, and hence a long exact sequence
of cohomology:
\begin{multline*}
H^1(X,TX\otimes (K_X^*)^{\otimes a-1}\otimes\pi^*(M))
\arrow H^1(X,TX\otimes (K_X^*)^{\otimes a}\otimes\pi^*(M))\\
\arrow H^1(C,(TX\otimes (K_X^*)^{\otimes a}\otimes\pi^*(M))|_C).
\end{multline*}

By induction, the first group shown is zero. Also, the restriction
of $TX$ to $C$ is an extension
$$0\arrow TC\arrow TX|_C\arrow N_{C/X}\arrow 0,$$
where $N_{C/X}\cong (K_X^*)|_C$ by definition of $C$.
Since $C$ has genus 1, this says that the restriction of $TX$
to $C$ is an extension of two line bundles of nonnegative degree.
Since $K_X^*$ is ample on $X$, $\pi^*(M)$ is nef, and $a$ is positive,
it follows that $TX\otimes (K_X^*)^{\otimes a}\otimes
\pi^*(M)$ restricted to $C$ is an extension of two
line bundles of positive degree. Since $C$ has genus 1, $H^1$ of every
line bundle of positive degree on $C$ is zero. We conclude
that the group on the right of the exact sequence above is
zero (like the group on the left). Therefore,
$$H^1(X,TX\otimes (K_X^*)^{\otimes a}\otimes\pi^*(M))=0,$$
which completes the induction. We have shown that
$X$ satisfies Bott vanishing.
\end{proof}

There are also higher-dimensional Fano varieties which satisfy
Bott vanishing but are not toric, in view of:

\begin{lemma}
Let $X$ and $Y$ be smooth projective varieties
over an algebraically closed field. Suppose
that $H^1(X,O)=0$. If $X$ and $Y$ satisfy Bott vanishing,
then so does $X\times Y$.
\end{lemma}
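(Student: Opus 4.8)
The plan is to reduce Bott vanishing for $X\times Y$ to Bott vanishing on the two factors via the Künneth formula, once the shape of the ample line bundles is understood. Write $p\colon X\times Y\to X$ and $q\colon X\times Y\to Y$ for the projections. The cotangent bundle of a product splits as $\Omega^1_{X\times Y}=p^*\Omega^1_X\oplus q^*\Omega^1_Y$, and taking exterior powers gives
$$\Omega^i_{X\times Y}=\bigoplus_{a+b=i}p^*\Omega^a_X\otimes q^*\Omega^b_Y.$$
So everything works cleanly provided an arbitrary ample line bundle $L$ on $X\times Y$ is an external product $p^*A\otimes q^*B$. I would therefore split the proof into two parts: (i) every line bundle on $X\times Y$ has the form $p^*A\otimes q^*B$, with $A,B$ ample when $L$ is ample; and (ii) the cohomological computation, granting (i).

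Part (ii) is the formal part. Assuming $L=p^*A\otimes q^*B$ with $A$ on $X$ and $B$ on $Y$ both ample, tensoring the displayed decomposition by $L$ and applying the Künneth formula over the field $k$ gives
$$H^j(X\times Y,\Omega^i_{X\times Y}\otimes L)=\bigoplus_{a+b=i}\bigoplus_{r+s=j}H^r(X,\Omega^a_X\otimes A)\otimes H^s(Y,\Omega^b_Y\otimes B).$$
Fix $j>0$; then in each summand $r+s=j>0$, so $r>0$ or $s>0$. If $r>0$ the first factor vanishes by Bott vanishing on $X$ (as $A$ is ample), and if $s>0$ the second factor vanishes by Bott vanishing on $Y$ (as $B$ is ample). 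Hence every summand vanishes and $H^j(X\times Y,\Omega^i_{X\times Y}\otimes L)=0$, as desired. Note that the ampleness of $A$ and $B$ individually follows from that of $L$, since $A=L|_{X\times\{y_0\}}$ and $B=L|_{\{x_0\}\times Y}$ are restrictions of an ample bundle to closed subvarieties.

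The main obstacle is Part (i), and this is exactly where the hypothesis $H^1(X,O)=0$ enters. Fix $k$-points $x_0\in X$, $y_0\in Y$ and, given a line bundle $L$, set $L'=L\otimes p^*(L|_{X\times\{y_0\}})^{-1}\otimes q^*(L|_{\{x_0\}\times Y})^{-1}$, which is trivial on $X\times\{y_0\}$ and on $\{x_0\}\times Y$. Since $X$ is connected, the bundles $L'|_{\{x\}\times Y}$ form an algebraic family all algebraically equivalent to $L'|_{\{x_0\}\times Y}=O_Y$, so they define a morphism $X\to\Pic^0_Y$ sending $x_0$ to $0$. Because the tangent space to the Picard scheme of $X$ at the identity is $H^1(X,O)=0$, the component $\Pic^0_X$, and hence the Albanese variety of $X$, is trivial; therefore this morphism, which factors through the Albanese, is constant, i.e.\ $L'|_{\{x\}\times Y}\cong O_Y$ for all $x$. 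By the seesaw theorem $L'$ is pulled back from $X$, and restricting to $X\times\{y_0\}$ shows this pullback is trivial, so $L'\cong O_{X\times Y}$ and $L=p^*(L|_{X\times\{y_0\}})\otimes q^*(L|_{\{x_0\}\times Y})$. (Alternatively, the resulting splitting $\Pic(X\times Y)=p^*\Pic(X)\oplus q^*\Pic(Y)$ under the assumption $H^1(X,O)=0$ is a standard consequence of the structure of the Picard scheme and could simply be cited.) Combining Parts (i) and (ii) proves the lemma.
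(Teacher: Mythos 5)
Your proof is correct and follows essentially the same route as the paper: decompose $\Omega^i_{X\times Y}$ as $\bigoplus_{a+b=i}p^*\Omega^a_X\otimes q^*\Omega^b_Y$, reduce to external products of ample bundles via $\Pic(X\times Y)=\Pic(X)\oplus\Pic(Y)$ (which the paper simply cites from Hartshorne, exercise III.12.6, while you reprove it with the seesaw theorem and the triviality of $\mathrm{Alb}(X)$), and finish with the K\"unneth formula. Your observation that $A$ and $B$ are ample because they are restrictions of $L$ to closed subvarieties is a slightly more direct justification than the paper's detour through very ampleness and K\"unneth on $H^0$, but the substance is the same.
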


\begin{proof}
Since $H^1(X,O)=0$, we have $\Pic(X\times Y)=\Pic(X)\oplus \Pic(Y)$
\cite[exercise III.12.6]{Hartshorne}. That is, every line bundle
on $X\times Y$ has the form $\pi_1^*L\otimes \pi_2^*M$ for some
line bundles $L$ on $X$ and $M$ on $Y$, where $\pi_1$ and $\pi_2$
are the two projections of $X\times Y$.

By the K\"unneth formula \cite[Tag 0BEC]{Stacks},
$$H^0(X\times Y,\pi_1^*L\otimes \pi_2^*M)
\cong H^0(X,L)\otimes_k H^0(Y,M).$$
Therefore, $\pi_1^*L\otimes \pi_2^*M$ is very ample
on $X\times Y$ if and only if $L$ and $M$ are very ample. It follows
that $\pi_1^*L\otimes \pi_2^*M$ is ample on $X\times Y$
if and only if $L$ and $M$ are ample.

Assume that $X$ and $Y$ satisfy Bott vanishing. We need to show
that 
$$H^j(X\times Y,\Omega^i_{X\times Y}\otimes \pi_1^*L\otimes
\pi_2^*M)=0$$
for all $j>0$, $i\geq 0$, and $L$ and $M$ ample line
bundles. Here $\Omega^i_{X\times Y}=\oplus_m \pi_1^*\Omega^m_X\otimes 
\pi_2^*\Omega^{i-m}_Y$. So the desired vanishing follows from
Bott vanishing on $X$ and $Y$ using the K\"unneth formula.
\end{proof}

\begin{remark}
The Kawamata-Viehweg vanishing theorem extends Kodaira
vanishing to nef and big line bundles,
but it seems unreasonable to ask when Bott vanishing
holds for nef and big line bundles. Indeed, Bott vanishing
fails for a nef and big line bundle on the blow-up
of $\P^2$ at a point, which is about as simple as you can get.
Even KAN vanishing fails for a nef and big line bundle
on the blow-up of $\P^3$ at a point
\cite[Example 4.3.4]{Lazarsfeld}.
\end{remark}

\section{Bott vanishing for K3 surfaces of Picard number 1}
\label{bott20}

We now show that Bott vanishing fails for K3 surfaces of degree
less than 20 or equal to 22,
while it holds for all K3 surfaces
of degree 20 or at least 24 with Picard number 1.
We give a quick proof by applying recent work
of Ciliberto-Dedieu-Sernesi, Arbarello-Bruno-Sernesi,
and Feyzbakhsh, which we discuss
in more detail in section \ref{74}.
In later sections, we consider what happens for K3 surfaces
with Picard number greater than 1.

The less precise statement that Bott vanishing holds
for very general K3 surfaces of degree 20 or at least 24
follows from the work of Beauville, Mori, and Mukai
in the 1980s \cite[section 5.2]{Beauville},
\cite[Theorem 1]{MM}, \cite[Theorem 7]{Mukai}.

Note that Ciliberto, Dedieu, Galati, and Knutsen recently proved
the analog of Beauville-Mori-Mukai's result for Enriques surfaces,
in particular computing $H^1(X,\Omega^1\otimes B)$
for $(X,B)$ a general member
of any component of the moduli space of polarized Enriques surfaces
\cite{CDGK}. By analogy with the results in this paper for K3 surfaces,
it would be interesting
to describe the precise locus where $H^1(X,\Omega^1\otimes B)$
is not zero.

We define a {\it K3 surface }to be
a smooth projective surface $X$ with
trivial canonical bundle and $H^1(X,O)=0$.
A {\it polarized K3 surface }of degree $2a$
is a K3 surface $X$ together with a primitive ample line bundle $B$ such that
$B^2=2a$.  The degree
of a polarized K3 surface must be even, because the intersection form
on $H^2(X,\Z)$ is even. Sometimes we call $(X,B)$ simply a K3 surface
of degree $2a$.

\begin{theorem}
\label{under20}
Let $X$ be a K3 surface with an ample line bundle $A$
of degree $A^2$ less than 20. Then Bott vanishing fails for $X$.
\end{theorem}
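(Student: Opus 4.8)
The plan is to show that Bott vanishing fails already in its most basic instance, $H^1(X,\Omega^1_X\otimes A)$ (that is, $i=j=1$ and $L=A$), and that this group is forced to be nonzero purely by a Riemann-Roch computation, with no vanishing theorem needed.

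First I would assemble the numerical invariants of $X$, which are available over any field from the definition of a K3 surface. Since $K_X$ is trivial, Serre duality gives $H^2(X,O_X)\cong H^0(X,K_X)^*=H^0(X,O_X)^*$, so $\chi(O_X)=1-0+1=2$; Noether's formula $\chi(O_X)=\tfrac{1}{12}(K_X^2+c_2(X))$ with $K_X^2=0$ then yields $c_2(X)=24$. Consequently the cotangent bundle has $c_1(\Omega^1_X)=c_1(K_X)=0$ and $c_2(\Omega^1_X)=c_2(TX)=24$.

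Next I would run Hirzebruch-Riemann-Roch on the surface, $\chi(X,E)=\int_X\bigl(\ch_2(E)+\tfrac12 c_1(E)\cdot c_1(X)+\rank(E)\tfrac{c_1(X)^2+c_2(X)}{12}\bigr)$. With $c_1(X)=0$, the middle term drops and the last term is $\rank\cdot 2=4$; using $\ch_2(\Omega^1_X)=\tfrac12(c_1^2-2c_2)=-24$ and $\ch_2(\Omega^1_X\otimes A)=\ch_2(\Omega^1_X)+c_1(\Omega^1_X)\cdot A+A^2=A^2-24$, I expect to obtain
$$\chi(X,\Omega^1_X\otimes A)=A^2-20.$$

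The conclusion is then immediate, and is the whole point. Since $A$ is ample we have $A^2<20$ by hypothesis, so $\chi(X,\Omega^1_X\otimes A)<0$; writing $\chi=h^0-h^1+h^2$ and using $h^0,h^2\geq 0$ gives
$$h^1(X,\Omega^1_X\otimes A)=h^0+h^2-\chi\geq -\chi\geq 20-A^2>0.$$
Hence $H^1(X,\Omega^1_X\otimes A)\neq 0$ and Bott vanishing fails. The attractive feature here is that there is essentially no obstacle: because $\chi$ is already negative, nonvanishing of $H^1$ follows without computing $H^0$ or $H^2$, and in particular without invoking KAN vanishing or stability of $\Omega^1_X$. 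The only place demanding care is the Riemann-Roch bookkeeping---chiefly the identity $\ch_2(\Omega^1_X)=-24$ and the Todd contribution $\rank\cdot\td_2=2\cdot 2=4$---since these are exactly what fix the critical threshold $20$.
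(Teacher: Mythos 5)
Your proposal is correct and is essentially identical to the paper's proof: both compute $\chi(X,\Omega^1_X\otimes A)=A^2-20$ by Riemann--Roch and conclude that $H^1(X,\Omega^1_X\otimes A)\neq 0$ since the Euler characteristic is negative. The bookkeeping ($c_2(X)=24$, $\ch_2(\Omega^1_X)=-24$, Todd contribution $4$) is all accurate.
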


\begin{proof}
It suffices to show that $H^1(X,\Omega^1_X\otimes A)$ is not zero.
That holds if the Euler characteristic
$\chi(X,\Omega^1_X\otimes A)$ is negative. Writing $z$ for the class
of a point in $H^4(X)$, Riemann-Roch gives:
\begin{align*}
\chi(X,\Omega^1_X\otimes A)&=\int_X \td(TX)\ch(\Omega^1_X\otimes A)\\
&=\int_X (1+0+2z)(2+0-24z)(1+c_1(A)+c_1(A)^2/2)\\
&= c_1(A)^2-20.
\end{align*}
\end{proof}

We deduce the following result from the work
of Ciliberto-Dedieu-Sernesi and Feyzbakhsh \cite{CDS, Feyzbakhsh}.

\begin{theorem}
\label{lowdegree20}
Let $(X,B)$ be a polarized complex
K3 surface of degree 20 or at least 24 with Picard number 1.
Then $H^1(X,\Omega^1_X\otimes B)=0$. On the other hand,
for every polarized K3 surface $(X,B)$ of degree 22,
$H^1(X,\Omega^1_X\otimes B)\neq 0$.
\end{theorem}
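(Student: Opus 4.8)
The plan is to translate the statement about $H^1(X,\Omega^1_X\otimes B)$ into projective geometry, following the framework of Ciliberto-Dedieu-Sernesi and Feyzbakhsh. Since $X$ is a K3 surface, $\Omega^1_X\cong TX$, and Serre duality on the surface gives
\[ H^1(X,\Omega^1_X\otimes B)\cong H^1(X,TX\otimes B^{-1})^*. \]
Taking $B$ very ample, so that it embeds $X\subset\P^{g}$ with $2g-2=B^2$, I would identify the right-hand group (up to the contribution of the cone directions) with the space of genuine first-order extensions of $X$ as a non-conical hyperplane section. Concretely, I would restrict the Euler sequence of $\P^g$ to $X$ and use $H^0(X,TX)=0$ (no vector fields on a K3) to read $H^1(X,TX\otimes B^{-1})$ as the obstruction to extending $X$ to a threefold containing it as a hyperplane section. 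Thus the theorem reduces, for each degree, to deciding whether a Picard-number-one K3 surface is extendable, and in degree $22$ whether every genus-$12$ K3 surface is.

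For degrees $20$ and at least $24$ (genus $g=11$ or $g\geq 13$) with $\Pic(X)=\Z\cdot B$, I would show $X$ is not extendable, so that the displayed group vanishes. If such an $X$ were a hyperplane section of a threefold $Y\subset\P^{g+1}$, then Grothendieck-Lefschetz forces $\Pic(Y)$ to have rank one, and adjunction forces $X\in|-K_Y|$, so $Y$ is a prime Fano threefold of genus $g$. But Fano threefolds (even with the mild singularities relevant here) have genus at most $12$ and never exactly $11$, so no such $Y$ exists. To pass from this to the vanishing of $H^1$ one must rule out first-order extensions that do not integrate to an actual threefold; here I would invoke Feyzbakhsh's reconstruction theorem for Mukai's program, which shows directly that a Picard-number-one K3 surface of degree $20$ or at least $24$ is non-extendable. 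Combined with the dictionary above, this yields $H^1(X,\Omega^1_X\otimes B)=0$.

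For degree $22$ (genus $12$) I would prove nonvanishing for every $(X,B)$ by specialization. By Mukai's work, a general polarized K3 surface of genus $12$ is an anticanonical section of a Fano threefold $V_{22}$, and such an honest extension produces a nonzero class corresponding to a non-conical extension in $H^1(X,TX\otimes B^{-1})$; hence $H^1(X,\Omega^1_X\otimes B)\neq 0$ for the general member. Since $h^1(X,\Omega^1_X\otimes B)$ is upper semicontinuous in the universal family over the irreducible moduli space of polarized K3 surfaces of degree $22$, a strict inequality on a dense subset propagates to every member. This gives $H^1(X,\Omega^1_X\otimes B)\neq 0$ for all polarized K3 surfaces of degree $22$, with no restriction on the Picard number.

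The main obstacle is the equivalence between nonvanishing of $H^1$ and genuine extendability, specifically the direction needed for the vanishing statement: a priori $H^1(X,TX\otimes B^{-1})\neq 0$ only records a first-order extension, which need not integrate to an actual Fano threefold, and one must also exclude singular and higher-index extensions. This is exactly the input I would borrow from Ciliberto-Dedieu-Sernesi and from Feyzbakhsh's analysis of Mukai's program. The remaining ingredients—the Serre duality identification, the Lefschetz and adjunction bookkeeping, the list of Fano threefold genera, and the semicontinuity argument in degree $22$—are routine once that input is in place.
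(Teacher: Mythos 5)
Your degree-22 half is essentially the paper's argument: Mukai's genus-12 Fano 3-fold gives a positive-dimensional family of K3 surfaces through a fixed canonical curve, hence nonvanishing of $H^1$ for the general member, and semicontinuity propagates this to all members. That part is fine.

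The vanishing half has a genuine gap. Your plan is: (i) identify $H^1(X,TX\otimes B^{-1})\neq 0$ with extendability of $X$ to a threefold $Y$ as a non-conical anticanonical hyperplane section, and then (ii) rule out $Y$ because prime Fano threefolds ``even with the mild singularities relevant here'' have genus at most $12$ and never $11$. Step (ii) is false as stated: the extension $Y$ produced by the Ciliberto--Dedieu--Sernesi machinery is only known to have isolated canonical Gorenstein singularities, and for such Fano threefolds the genus classification is \emph{open} (the paper stresses this explicitly before Theorem \ref{fano}); Prokhorov's theorem only bounds the genus by $(-K_Y)^3\leq 72$, i.e.\ $g\leq 37$, so genera $11$ and $13,\dots,37$ are not excluded by any known classification, even after using Grothendieck--Lefschetz to force $\Pic(Y)=\Z\cdot(-K_Y)$. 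You cannot borrow the smooth genus list here. Relatedly, you invoke Feyzbakhsh for ``non-extendability of $X$,'' but that is not what her theorem says: she (and Arbarello--Bruno--Sernesi) prove that a canonical curve $C$ of genus $11$ or $\geq 13$ lies on a \emph{unique} K3 surface of Picard number $1$, i.e.\ a statement about the fibers of $f_g\colon\p_g\to\m_g$ over the curve, not about threefolds containing $X$. The paper's actual chain is: Beauville's identification $h^1(X,\Omega^1\otimes B)=\dim\ker(df_g)$ at $(X,C)$; the Ciliberto--Dedieu--Sernesi theorem that, for $g\geq 11$ and $\Cliff(C)\geq 3$ (which you must verify --- it follows from Picard number $1$ and $B^2\geq 20$ via Knutsen's lemma), this kernel dimension equals the \emph{actual} local fiber dimension of $f_g$ at $(X,C)$; and then the uniqueness theorem makes that fiber a point. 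Your argument needs to be rerouted through the fiber-dimension statement rather than through a nonexistence claim for singular Fano threefolds.
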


Note that $B$ is a {\it primitive }ample line bundle
in Theorem \ref{lowdegree20}.
There is an irreducible
(19-dimensional) moduli space of polarized complex K3 surfaces
of degree $2a$,
for each positive integer $a$ \cite[Corollary 6.4.4]{Huybrechts}.
Moreover, a very general K3 surface $X$ in this moduli space has Picard
number 1 \cite[proof of Corollary 14.3.1]{Huybrechts}.

\begin{proof}
Let $\p_g$ be the moduli stack of pairs $(X,C)$
with $X$ a K3 surface and $C$ a smooth curve of genus $g$ in $X$
such that $O(C)$ is a primitive ample line bundle on $X$. (Then
$O(C)$ has degree $2g-2$ on $X$.) Let $\m_g$ be the moduli stack
of curves of genus $g$. There is a morphism of stacks
$$f_g\colon \p_g\arrow \m_g,$$
taking $(X,C)$ to the curve $C$. Beauville observed
that $H^1(X,\Omega^1_X\otimes O(C))^*
\cong H^1(X, TX\otimes O(-C))$
can be identified with the kernel of the derivative of $f_g$
at $(X,C)$ \cite[section 5.2]{Beauville}.
Therefore, Theorem \ref{lowdegree20} for {\it general }polarized
K3 surfaces reduces to Mukai's
theorem (completing his work with Mori)
that $f_g$ is generically finite if and only if
$g=11$ or $g\geq 13$ (corresponding to polarized K3 surfaces of degree 20
or at least 24) \cite[Theorem 1]{MM}, \cite[Theorem 7]{Mukai}.
From this point of view,
describing the locus where $H^1(X,\Omega^1_X\otimes B)$ is not zero
amounts to determining the ramification locus 
of the morphism $f_g$.

Arbarello-Bruno-Sernesi and Feyzbakhsh
recently strengthened Mukai's result by
showing that when $g=11$ or $g\geq 13$, the morphism
$f_g$ is injective at all pairs $(X,C)$ with
$X$ of Picard number 1 \cite{ABSMukai, Feyzbakhsh}. We want to show
that when $g=11$ or $g\geq 13$,
the {\it derivative }of $f_g$
is also injective at all pairs $(X,C)$ with
$X$ of Picard number 1.

The failure of Bott vanishing for K3 surfaces
$(X,B)$ of degree 22 follows from the existence of a smooth Fano 3-fold $W$
with Picard group generated by $-K_W$
and genus 12 \cite[Proposition 6]{Mukai}.
(The genus $g$ is defined
by $(-K_W)^3=2g-2$. The possible genera of smooth Fano 3-folds
with Picard group generated by $-K_W$
are $2\leq g\leq 10$ and $g=12$.)
Indeed, Beauville showed by a short deformation-theory argument
that a general hyperplane section
of a general deformation $W'$ of $W$ gives a general K3 surface $X$
of degree 22. But then a hyperplane section $C\subset X$
is the intersection of $W'$ with a codimension-2 linear space.
So there is a whole $\P^1$ of K3 surfaces (generically not isomorphic)
which all have the same curve $C$ as a hyperplane section.
That is, $f_{12}\colon \p_{12}\arrow \m_{12}$ is not generically finite,
and hence Bott vanishing fails for all K3 surfaces $(X,B)$
of degree 22.

Now let $(X,B)$ be any polarized K3 surface of degree 20 or at least 24
with Picard number 1. The assumptions imply that any smooth curve
$C$ in the linear system of $B$ has Clifford index at least 3,
as discussed in section \ref{74}. Using this together with $B^2\geq 20$,
Ciliberto-Dedieu-Sernesi showed that $h^1(X,\Omega^1\otimes B)
= \dim(\ker(df_g|_{(X,C)}))$ is equal to the fiber dimension
$\dim(f_g^{-1}(C))$ near $(X,C)$ \cite[Theorem 2.6]{CDS}.
Using that $X$ has Picard number 1 and $B^2$ is 20 or at least 24,
Arbarello-Bruno-Sernesi and Feyzbakhsh showed that $C$ lies
on a unique K3 surface of Picard number 1 \cite{ABSMukai},
\cite[Theorem 1.1]{Feyzbakhsh}. Therefore,
$f_g^{-1}(C)$ is a single point in a neighborhood of $(X,C)$.
Combining these two results
shows that $H^1(X,\Omega^1\otimes B)=0$.
\end{proof}

We now deduce the full statement
of Bott vanishing for K3 surfaces with Picard number 1:

\begin{theorem}
\label{k3.20}
Let $X$ be a complex polarized
K3 surface of degree 20 or at least 24 with Picard number 1.
Then $X$ satisfies Bott vanishing.
\end{theorem}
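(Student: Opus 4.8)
The plan is to reduce full Bott vanishing to the single cohomology group that Theorem~\ref{lowdegree20} already controls, namely $H^1(X,\Omega^1_X\otimes B)$ for $B$ ample primitive. Let me think about what needs checking. Bott vanishing requires $H^j(X,\Omega^i_X\otimes L)=0$ for all ample $L$, all $i\geq 0$, and all $j>0$. Since $X$ is a surface, $i\in\{0,1,2\}$ and $j\in\{1,2\}$. The easy cases first: because $X$ is a K3 surface in characteristic zero, KAN vanishing (Theorem~\ref{kan}(1)) disposes of everything with $i+j>2$. That kills $H^2(X,\Omega^1_X\otimes L)$, $H^1(X,\Omega^2_X\otimes L)=H^1(X,L)$ (using $\Omega^2_X\cong O_X$), and $H^2(X,\Omega^2_X\otimes L)=H^2(X,L)$. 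For $i=0$ we need $H^1(X,L)=0$ and $H^2(X,L)=0$ for ample $L$, which is Kodaira vanishing (the special case $i=n=2$ of KAN). So the only surviving case is $H^1(X,\Omega^1_X\otimes L)=0$ for all ample $L$.

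Next I would handle the jump from the primitive bundle $B$ to an arbitrary ample $L$. Since $X$ has Picard number~1, $\Pic(X)=\Z\cdot B$ with $B$ the primitive ample generator, so every ample line bundle is $L=mB$ for some integer $m\geq 1$. I would prove $H^1(X,\Omega^1_X\otimes mB)=0$ for all $m\geq 1$ by induction on $m$. The base case $m=1$ is exactly the content of Theorem~\ref{lowdegree20}. For the inductive step, the natural tool is a smooth curve $C$ in $|B|$: for $B^2\geq 20$ primitive and ample on a K3, $B$ is very ample, so a general member $C$ of $|B|$ is a smooth curve, and by adjunction $C$ has genus $g$ with $2g-2=B^2$, i.e.\ $K_C\cong O_C$ is not quite right---rather $K_C\cong (K_X\otimes O(C))|_C\cong B|_C$. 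I would use the restriction sequence
$$0\arrow O_X(-C)\arrow O_X\arrow O_C\arrow 0,$$
tensor with $\Omega^1_X\otimes mB$ (noting $O_X(-C)\cong -B$ since $K_X$ is trivial), and obtain
$$H^1(X,\Omega^1_X\otimes (m-1)B)\arrow H^1(X,\Omega^1_X\otimes mB)\arrow H^1(C,(\Omega^1_X\otimes mB)|_C).$$
The left term vanishes by induction, so it suffices to kill the right term.

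The main obstacle, then, is the restriction term $H^1(C,(\Omega^1_X\otimes mB)|_C)$. Here I would use the conormal/cotangent sequence $0\arrow N^*_{C/X}\arrow \Omega^1_X|_C\arrow \Omega^1_C\arrow 0$, which on the curve $C$ reads $0\arrow -B|_C\arrow \Omega^1_X|_C\arrow K_C\arrow 0$ with $K_C\cong B|_C$. Twisting by $mB|_C$ exhibits $(\Omega^1_X\otimes mB)|_C$ as an extension of the two line bundles $K_C\otimes mB|_C$ and $(m-1)B|_C$. Both have positive degree on $C$ once $m\geq 1$ and $B^2\geq 20>0$: indeed $\deg((m-1)B|_C)=(m-1)B^2\geq 0$ and $\deg(K_C\otimes mB|_C)=(2g-2)+mB^2>0$. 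For $m\geq 2$ both summands have positive degree, and since $C$ has genus $g\geq 11$ I would instead aim to show each line bundle has degree $\geq 2g-1$, so that $H^1$ vanishes by the standard bound that $H^1$ of a line bundle of degree $\geq 2g-1$ on a curve of genus $g$ is zero. The delicate point is the low value $m=2$ on the smaller summand $(m-1)B|_C=B|_C=K_C$, for which $H^1(C,K_C)=H^0(C,O_C)^*=k\neq 0$; so a naive degree argument fails exactly there, and I would need the refinement that the extension class or the cohomology of the extension still vanishes---this is precisely where one expects to re-invoke the Clifford-index/Mukai input underlying Theorem~\ref{lowdegree20} rather than a crude degree estimate. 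I expect the cleanest route is to fold all $m\geq 1$ directly into the framework of Theorem~\ref{lowdegree20} (which already treats $B$ itself), handling the higher twists $mB$ by the induction above with the positive-degree estimate, and treating the single genuinely borderline summand by a separate argument.
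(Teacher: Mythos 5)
Your reduction to the single group $H^1(X,\Omega^1_X\otimes mB)$ via KAN vanishing, and your plan to induct on $m$ using the restriction sequence for a smooth curve $C\in|B|$, both match the paper's strategy. But the step where you kill the restriction term $H^1(C,(\Omega^1_X\otimes mB)|_C)$ by splitting $\Omega^1_X|_C$ via the conormal sequence into the line bundles $(m-1)B|_C$ and $K_C\otimes mB|_C$ has a genuine gap, which you yourself locate but do not close: at $m=2$ the sub-line-bundle is $B|_C\cong K_C$, whose $H^1$ is one-dimensional, so the degree argument gives nothing, and whether $H^1$ of the extension vanishes depends on the surjectivity of the connecting map $H^0(C,K_C\otimes 2B|_C)\to H^1(C,K_C)$, i.e.\ on the (twisted) extension class of the conormal sequence. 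Saying one ``expects to re-invoke the Clifford-index/Mukai input'' is not an argument; as written the induction breaks at its first step.

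The paper avoids this entirely by never decomposing $\Omega^1_X|_C$. Instead (Lemma \ref{multiple}) it first deduces $H^1(C,\Omega^1_X\otimes B)=0$ from the hypothesis $H^1(X,\Omega^1_X\otimes B)=0$ together with $H^2(X,\Omega^1_X)=0$ (a K3 identity), using the restriction sequence in the \emph{other} direction. Then, since $B|_C$ is basepoint-free, it is $O_C(S)$ for an effective zero-dimensional divisor $S$, and the sequence $0\to O_C\to B|_C\to B|_S\to 0$ tensored with $\Omega^1_X\otimes B^{\otimes(j-1)}$ gives surjections $H^1(C,\Omega^1_X\otimes B^{\otimes(j-1)})\twoheadrightarrow H^1(C,\Omega^1_X\otimes B^{\otimes j})$ because $S$ has dimension $0$. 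This bootstraps the vanishing on the curve to all $j\geq 1$ with no case analysis, and then your restriction sequence on $X$ finishes the induction. If you want to salvage your route, you would have to prove that connecting map at $m=2$ is nonzero, which amounts to the nontriviality of a Wahl-type Gaussian map and is exactly the kind of hard input the paper's rearrangement renders unnecessary.
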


Note that, without the assumption of Picard number 1,
Bott vanishing does not hold for any nonempty Zariski open subset
of the moduli space of K3 surfaces of given degree $2a\geq 20$.
Indeed, there is a countably infinite set of divisors
in that moduli space corresponding to K3 surfaces
that also have an ample line bundle of degree $<20$, and Bott vanishing
fails for those K3 surfaces by Theorem \ref{under20}.

On the other hand, it is arguably more natural
to ask when Bott vanishing holds for positive multiples
of the given line bundle $B$, rather than
for all ample line bundles.
By Lemma \ref{multiple}, it is equivalent
to determine the locus of polarized K3 surfaces $(X,B)$
such that $H^1(X,\Omega^1_X\otimes B)$ is not zero.
The rest of the paper will focus on that problem.

\begin{proof} (Theorem \ref{k3.20})
Write $\Pic(X)=\Z\cdot B$ with $B$ ample.
Then every ample line bundle on $X$ is a positive multiple
of $B$.

Kodaira vanishing (Theorem \ref{kan})
gives that $H^i(X,\Omega^2_X\otimes L)=0$
for $L$ ample and $i>0$. Since $K_X=\Omega^2_X$ is trivial,
it follows that $H^i(X,L)=0$ for $L$ ample and $i>0$. Next,
KAN vanishing (Theorem \ref{kan}) gives that
$H^2(X,\Omega^1_X\otimes L)=0$ for $L$ ample.

It remains to show that $H^1(X,\Omega^1_X\otimes L)=0$
for every ample line bundle $L$ on $X$. By Theorem \ref{lowdegree20},
since $B^2$ is 20 or at least 24,
we know
that $H^1(X,\Omega^1_X\otimes B)=0$, where $B$ is the ample generator.
We will go from there to
the result for all positive multiples of $B$
(thus for all ample line bundles on $X$).

We recall Saint-Donat's sharp results about linear systems on K3 surfaces
\cite{SD}, \cite[Theorem 5]{Mori}:

\begin{theorem}
\label{sd}
Let $X$ be a K3 surface over an algebraically closed field
of characteristic not 2.
Let $B$ be a nef line bundle on $X$. Then:

(1) $B$ is not basepoint-free if and only if there is a curve
$E$ in $X$ such that $E^2=0$ and $B\cdot E=1$.

(2) Assume that $B^2\geq 4$. Then $B$ is not very ample
if and only if there is (a) a curve $E$
with $E^2=0$ such that $B\cdot E$ is 1 or 2, (b) a curve $E$
such that $E^2=2$ and $B\sim 2E$, or (c) a curve $E$
such that $E^2=-2$ and $E\cdot B=0$. (So, if $B$ is ample
and $B^2\geq 10$, $B$ fails to be very ample if and only if
there is a curve $E$ in $X$ such that $E^2=0$ and $B\cdot E$
is 1 or 2.)
\end{theorem}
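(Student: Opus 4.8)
The plan is to follow the classical analysis of Saint-Donat and Mayer, whose engine is the combination of Riemann-Roch on a K3 surface, $\chi(L)=2+\tfrac12 L^2$, with Serre duality $h^2(X,L)=h^0(X,-L)$ (using $K_X\cong O_X$). For a nef line bundle $B$ with $B^2>0$ the class $-B$ is not effective, so $h^2(X,B)=0$ and $h^0(X,B)=2+\tfrac12 B^2+h^1(X,B)$. The first substantive step is to establish $h^1(X,B)=0$ for every nef $B$ with $B^2>0$: once a general member $C\in|B|$ is known to be a smooth connected curve, the restriction sequence $0\to O_X\to O_X(B)\to O_C(C)\to 0$ together with the adjunction identity $O_C(C)\cong K_C$ pins down the cohomology, and the connecting map $H^1(C,K_C)\to H^2(X,O_X)$ is an isomorphism. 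In characteristic zero this vanishing is just Kawamata-Viehweg applied to the nef and big bundle $B=K_X+B$; the restriction-sequence argument is what one needs in characteristic $p$, where the hypothesis $\mathrm{char}\,k\neq 2$ enters to ensure that a base-point-free pencil has a smooth general member and that the fibrations and double covers below behave as expected.

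With this in hand I would prove (1) by decomposing $|B|=|M|+F$ into its movable and fixed parts and analyzing the base locus of $|M|$. For the easy implication, if there is a curve $E$ with $E^2=0$ and $B\cdot E=1$, then $|E|$ is a base-point-free elliptic pencil, and restricting to a fiber via $0\to O_X(B-E)\to O_X(B)\to O_E(B)\to 0$ exhibits $B|_E$ as a degree-$1$ line bundle on a genus-$1$ curve, which has a unique effective section; the resulting point sweeps out a base point of $|B|$. The converse is the heart of the matter: one must show that any base point of $|B|$ forces the existence of such an $E$. Here I would run the Saint-Donat/Mayer case analysis, using the Hodge index theorem to constrain the intersection numbers of the movable part, the fixed part, and any elliptic class meeting the base locus, and conclude that the only numerical configuration compatible with a base point is $E^2=0$, $B\cdot E=1$.

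For (2), assume $B^2\geq 4$; if $|B|$ is not base-point free then by part (1) we are in case (a), so I may assume $\phi_B\colon X\to\P^N$ is a morphism and ask when it fails to be a closed immersion. Failure means $B$ does not separate some length-two subscheme $Z$, i.e.\ $h^0(X,B\otimes I_Z)>h^0(X,B)-2$; computing with Riemann-Roch and the restriction sequence for $Z$, and again invoking Hodge index to classify the effective classes that can obstruct separation, should produce exactly the three listed configurations: a curve $E$ with $E^2=0$ and $B\cdot E\in\{1,2\}$ (the (quasi-)hyperelliptic elliptic-pencil case), a hyperelliptic plane model $B\sim 2E$ with $E^2=2$, or a contracted $(-2)$-curve $E$ with $B\cdot E=0$. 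The final parenthetical for ample $B$ with $B^2\geq 10$ is then a short numerical check: ampleness gives $B\cdot E>0$ for every curve, which kills case (c); case (b) would force $B^2=4E^2=8<10$; and only $B\cdot E\in\{1,2\}$ with $E^2=0$ survives.

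The step I expect to be the main obstacle is the converse direction of (1) together with the companion classification in (2): translating the geometric failure---a base point, or a non-separated pair of points or tangent vector---into a precise numerical statement requires the full Saint-Donat/Mayer machinery, namely controlling the fixed part, applying the Hodge index theorem repeatedly, and handling the borderline cases where $M^2=0$ or where $(-2)$-curves intervene. The hypothesis $\mathrm{char}\,k\neq 2$ must be tracked throughout, since it is exactly what makes the double-cover analysis in case (b) and the smoothness of general pencil members valid.
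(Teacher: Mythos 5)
The paper does not prove this statement: Theorem \ref{sd} is recalled from Saint-Donat \cite{SD}, with \cite[Theorem 5]{Mori} covering positive characteristic, so there is no in-paper argument to compare yours against. Judged on its own, your write-up correctly identifies that classical route and executes the routine parts: Riemann--Roch plus Serre duality on a K3, the vanishing $h^1(X,B)=0$ for $B$ nef and big, the easy implication in (1) (a fiber $E_t$ of the elliptic pencil carries a degree-one line bundle $B|_{E_t}$ with a unique effective divisor, whose support is a base point of $|B|$), and the closing numerical check that for $B$ ample with $B^2\geq 10$ case (c) is excluded by ampleness and case (b) would force $B^2=8$.

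The genuine gap is that the two hard directions --- a base point of $|B|$ forces a curve $E$ with $E^2=0$ and $B\cdot E=1$, and failure of very ampleness forces one of the configurations (a), (b), (c) --- are not argued at all; they are deferred to ``the Saint-Donat/Mayer case analysis'' and to the assertion that the Hodge index theorem ``should produce exactly the three listed configurations.'' That case analysis \emph{is} the theorem: one must actually carry out the decomposition $|B|=|M|+F$, show the fixed part is trivial or control it, treat $M^2=0$ and $M^2>0$ separately, and in part (2) classify the effective divisors obstructing separation of length-two subschemes, which occupies several pages in \cite{SD}. There is also an ordering problem in your preliminary step: deducing $h^1(X,B)=0$ from a smooth connected member $C\in|B|$ presupposes base-point-freeness, which is precisely what part (1) is meant to establish; the standard fix is to run the vanishing argument on the moving part first, or to invoke Ramanujam/Kawamata--Viehweg vanishing for nef and big $B$ directly (with a separate argument in characteristic $p$). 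As it stands the proposal is a correct table of contents for Saint-Donat's proof rather than a proof.
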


The following lemma completes the proof of Theorem \ref{k3.20}.
\end{proof}

\begin{lemma}
\label{multiple}
Let $X$ be a K3 surface with a basepoint-free ample line bundle $B$.
(In particular, this holds if $\Pic(X)=\Z\cdot B$ and $B$ is ample.)
If $H^1(X,\Omega^1_X\otimes B)=0$,
then $H^1(X,\Omega^1_X\otimes B^{\otimes j})=0$
for all $j\geq 1$.
\end{lemma}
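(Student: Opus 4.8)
The plan is to induct on $j$, the case $j=1$ being exactly the hypothesis. Throughout I fix a smooth curve $C$ in the linear system $|B|$, which exists by Bertini's theorem since $B$ is basepoint-free. Then $O_X(C)\cong B$, the conormal bundle is $N_{C/X}^*\cong O_X(-C)|_C\cong B^{-1}|_C$, and by adjunction (using that $K_X$ is trivial) the normal bundle $N_{C/X}\cong B|_C$ is isomorphic to $K_C$; the curve $C$ has genus $g$ with $2g-2=B^2\geq 2$.

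For the inductive step, I would tensor the sequence $0\to O_X(-C)\to O_X\to O_C\to 0$ (where $O_X(-C)\cong B^{-1}$) with the vector bundle $\Omega^1_X\otimes B^{\otimes(j+1)}$ to get
$$0\to \Omega^1_X\otimes B^{\otimes j}\to \Omega^1_X\otimes B^{\otimes(j+1)}\to (\Omega^1_X\otimes B^{\otimes(j+1)})|_C\to 0.$$
In the long exact sequence the inductive hypothesis gives $H^1(X,\Omega^1_X\otimes B^{\otimes j})=0$, while KAN vanishing (Theorem \ref{kan}) gives $H^2(X,\Omega^1_X\otimes B^{\otimes j})=0$ since $B^{\otimes j}$ is ample and $1+2>2$. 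Hence
$$H^1(X,\Omega^1_X\otimes B^{\otimes(j+1)})\cong H^1(C,(\Omega^1_X\otimes B^{\otimes(j+1)})|_C),$$
and it remains to show the right-hand group vanishes.

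To analyze $\Omega^1_X|_C$ I would tensor the conormal sequence $0\to N_{C/X}^*\to \Omega^1_X|_C\to K_C\to 0$ with $B^{\otimes(j+1)}|_C$, obtaining
$$0\to B^{\otimes j}|_C\to (\Omega^1_X\otimes B^{\otimes(j+1)})|_C\to K_C\otimes B^{\otimes(j+1)}|_C\to 0.$$
The quotient $K_C\otimes B^{\otimes(j+1)}|_C$ has degree $(j+2)(2g-2)>2g-2$, so it has no $H^1$. When $j\geq 2$ the sub-bundle $B^{\otimes j}|_C$ likewise has degree $j(2g-2)>2g-2$ and hence no $H^1$, so the middle term has no $H^1$ and that step is finished.

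The delicate case, and the main obstacle, is $j=1$: here the sub-bundle $B|_C\cong K_C$ has degree exactly $2g-2$, so $H^1(C,K_C)=k$ does not vanish, and $H^1(C,(\Omega^1_X\otimes B^{\otimes 2})|_C)$ is the cokernel of the connecting map $\delta\colon H^0(C,K_C\otimes B^{\otimes 2}|_C)\to H^1(C,K_C)=k$. This $\delta$ is cup product with the extension class $e$ of the conormal sequence, lying in $H^1(C,B^{-2}|_C)$; by the perfectness of Serre duality on $C$ the map $\delta$ is surjective precisely when $e\neq 0$, that is, when the conormal sequence does not split. To prove non-splitting I would invoke the hypothesis: Serre duality on $X$ converts $H^1(X,\Omega^1_X\otimes B)=0$ into $H^1(X,TX\otimes B^{-1})=0$, and tensoring $0\to O_X(-C)\to O_X\to O_C\to 0$ with $TX$ then forces $H^0(C,TX|_C)=0$, using $H^0(X,TX)=0$ for a K3 surface. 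If the conormal sequence split we would have $TX|_C\cong TC\oplus N_{C/X}$, so $H^0(C,N_{C/X})=H^0(C,K_C)\neq 0$ would inject into $H^0(C,TX|_C)$, a contradiction. Hence the conormal sequence is non-split, $\delta$ is surjective, and $H^1(X,\Omega^1_X\otimes B^{\otimes 2})=0$, completing the induction.
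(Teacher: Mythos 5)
Your proof is correct, and every step checks out: the identification of the connecting map with cup product against the extension class, the perfectness of the Serre pairing $H^0(C,K_C^{\otimes 3})\otimes H^1(C,K_C^{-2})\arrow H^1(C,K_C)$, and the deduction of $H^0(C,TX|_C)=0$ from $H^1(X,TX\otimes B^{-1})=H^0(X,TX)=0$. But it takes a genuinely different route at the decisive point. Both arguments induct via the restriction sequence on $X$ and reduce to showing $H^1(C,(\Omega^1_X\otimes B^{\otimes j})|_C)=0$ for $j\geq 2$. The paper obtains this without ever opening up $\Omega^1_X|_C$: from $0\arrow \Omega^1_X\arrow \Omega^1_X\otimes B\arrow (\Omega^1_X\otimes B)|_C\arrow 0$, the hypothesis, and $H^2(X,\Omega^1_X)=0$, it gets $H^1(C,(\Omega^1_X\otimes B)|_C)=0$; then, writing the basepoint-free bundle $B|_C$ as $O_C(S)$ for an effective divisor $S$, the surjections $H^1(C,\Omega^1_X\otimes B^{\otimes j-1})\arrow H^1(C,\Omega^1_X\otimes B^{\otimes j})$ (the cokernel lives on the $0$-dimensional $S$) propagate the vanishing to all $j$ with no borderline case. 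You instead split $\Omega^1_X|_C$ by the conormal sequence, which settles $j\geq 3$ by a degree count but leaves $j=2$ hanging on the surjectivity of the connecting map onto $H^1(C,K_C)=\C$; you then must prove that the normal bundle sequence of $C$ in $X$ is non-split, which is where your argument re-imports the hypothesis in Serre-dual form. Your version is longer but isolates the geometric content (the non-split normal bundle sequence of a hyperplane section of a K3, closely related to the Wahl-map considerations later in the paper), while the paper's twist-by-$S$ bootstrap is shorter and uses the basepoint-freeness of $B$ a second, essential time. The only omission is cosmetic: you do not verify the parenthetical claim that $\Pic(X)=\Z\cdot B$ with $B$ ample forces $B$ to be basepoint-free, which the paper gets from Theorem \ref{sd}.
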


\begin{proof}
First, if $\Pic(X)=\Z\cdot B$ and $B$ is ample, then
$B$ is basepoint-free by Theorem \ref{sd}. 

Let $X$ be a K3 surface with a basepoint-free ample line bundle $B$.
By Bertini's theorem, there
is a smooth curve $D$ in the linear system $|B|$. This gives
a short exact sequence of sheaves $0\arrow O_X\arrow B\arrow B|_D\arrow 0$.
Tensoring with $\Omega^1_X$ and taking cohomology gives an exact
sequence
$$H^1(X,\Omega^1_X\otimes B)\arrow H^1(D,\Omega^1_X\otimes B)
\arrow H^2(X,\Omega^1_X).$$
We are given that $H^1(X,\Omega^1\otimes B)$ is zero,
and $H^2(X,\Omega^1_X)$ is zero since $X$ is a K3 surface; so
$H^1(D,\Omega^1_X\otimes B)=0$. Next, since $B$ restricted to the curve $D$
is basepoint-free, it is represented by an effective divisor $S$ on $D$.
This gives
a short exact sequence of sheaves $0\arrow O_D
\arrow B|_D\arrow B|_S\arrow 0$, and hence a surjection
$$H^1(D,\Omega^1_X\otimes B^{\otimes j-1})\arrow H^1(D,
\Omega^1_X\otimes B^{\otimes j})$$
for any $j\in \Z$ (using that $S$ has dimension 0). It follows
that $H^1(D,\Omega^1_X\otimes B^{\otimes j})=0$ for all $j\geq 1$.
We can then apply the analogous exact sequence on $X$,
$$H^1(X,\Omega^1_X\otimes B^{\otimes j-1})\arrow
H^1(X,\Omega^1_X\otimes B^{\otimes j})\arrow H^1(D,\Omega^1_X
\otimes B^{\otimes j}),$$
to conclude by induction that $H^1(X,\Omega^1_X\otimes B^{\otimes j})=0$
for all $j\geq 1$.
\end{proof}

\section{Failure of Bott vanishing on a K3 surface
in terms of elliptic curves of low degree}
\label{74}

Theorem \ref{fano} clarifies the meaning of Bott vanishing
for a K3 surface $X$. Namely,
if $H^1(X,\Omega^1_X\otimes B)\neq 0$
for an ample line bundle $B$, then one of three conditions
must hold: $B^2$ is less than 20,
there is an elliptic curve of low degree with respect to $B$,
or $X$ is an anticanonical divisor in a singular Fano 3-fold $Y$
with $B=-K_Y|_X$.
The proof is based on recent work of Ciliberto, Dedieu,
and Sernesi, which in turn buids on the work
of Arbarello, Bruno, and Sernesi \cite{ABShyperplane, CDS}.

The main result of Arbarello-Bruno-Sernesi was that a Brill-Noether general
curve $C$ of genus at least 12 is the hyperplane section of a (possibly
singular) K3 surface,
or of a limit of K3 surfaces (a ``fake K3''), if and only if the Wahl
map of $C$ is not surjective. The proof was based on a new vanishing
theorem for the square of the ideal sheaf of a projective curve.
Ciliberto-Dedieu-Sernesi applied that work on curves
to give criteria
for a projective K3 surface to be a hyperplane section of a
(possibly singular) Fano 3-fold.

Theorem \ref{fano} characterizes exactly when $H^1(X,\Omega^1\otimes B)$
is zero except when $X$ contains an elliptic curve of low degree. That case
is studied in section \ref{ellipticsection}, which includes a complete answer
for $B$ of high degree.

The classification of Fano 3-folds with canonical Gorenstein
singularities remains open. As a result, Theorem \ref{fano}
is not as explicit an answer as one might like. Nonetheless,
it is a strong statement, from which we draw more specific
consequences in the rest of the paper. For our purpose,
we only want the classification of Fano 3-folds with
{\it isolated }canonical Gorenstein singularities, which
may be within reach.

In particular, Theorem \ref{fano} implies that for K3 surfaces $(X,B)$
with no elliptic curve of low degree, the nonvanishing
of $H^1(X,\Omega^1\otimes B)$ is a Noether-Lefschetz
condition. More precisely, this group is nonzero if and only
if $(X,B)$ belongs to certain irreducible components
of the space of K3 surfaces with Picard group containing
one of a finite list of lattices. (This follows from Theorem \ref{fano}
by Beauville's deformation-theory
argument, which works with no change for Fano 3-folds with
isolated singularities \cite[Theorem]{Beauville}.)
The lattices that occur are exactly the Picard
lattices of the Fano 3-folds with isolated canonical Gorenstein
singularities, these being not yet known.

\begin{theorem}
\label{fano}
Let $X$ be a complex K3 surface with an ample line bundle $B$ such that
there is no curve $E$ in $X$ with $E^2=0$
and $1\leq B\cdot E \leq 4$.
Then $H^1(X,\Omega^1\otimes B)\neq 0$ if and only if $B^2<20$
or $X$ is a smooth anticanonical divisor in some Fano 3-fold $Y$
with at most isolated canonical Gorenstein singularities
such that $B=-K_Y|_X$.
\end{theorem}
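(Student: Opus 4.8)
The plan is to handle the trivial range $B^2<20$ first and then, on the main range $B^2\ge 20$, to extract the whole biconditional from the identification of $H^1(X,\Omega^1_X\otimes B)$ with the fibers of the moduli map $f_g\colon\p_g\arrow\m_g$. If $B^2<20$, then Riemann-Roch gives $\chi(X,\Omega^1_X\otimes B)=B^2-20<0$, and since $H^2(X,\Omega^1_X\otimes B)=0$ by KAN vanishing (Theorem \ref{kan}), we get $H^1\neq 0$; this is exactly Theorem \ref{under20}, and it places $X$ in the ``$B^2<20$'' alternative. So I would assume $B^2\ge 20$ from now on, reducing the theorem to the assertion that $H^1(X,\Omega^1_X\otimes B)\neq 0$ if and only if $X$ is a smooth anticanonical divisor in a Fano $3$-fold $Y$ with isolated canonical Gorenstein singularities and $B=-K_Y|_X$.

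Next I would turn the hypothesis into two usable facts. Since there is no curve $E$ with $E^2=0$ and $B\cdot E\in\{1,2\}$, Saint-Donat's theorem (Theorem \ref{sd}(2), applicable as $B^2\ge 20\ge 10$) shows that $B$ is very ample; hence $|B|$ embeds $X$ into $\P^g$ (with $g=B^2/2+1$) and a smooth curve $C\in|B|$ appears as a hyperplane section, canonically embedded in $\P^{g-1}$ via $B|_C\cong K_C$. Secondly, an elliptic curve $E$ with $E^2=0$ and $B\cdot E=d$ cuts a $g^1_d$ on $C$ of Clifford index $d-2$, while conversely the Clifford-index theory of curves on K3 surfaces (Green-Lazarsfeld, Saint-Donat) shows that for $B^2\ge 20$ a Clifford index at most $2$ forces such an $E$ with $1\le B\cdot E\le 4$, the remaining low-Clifford cases all having $B^2<20$. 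Thus the hypothesis is equivalent to the statement that every smooth $C\in|B|$ has Clifford index at least $3$, which is precisely the input required by Ciliberto-Dedieu-Sernesi.

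For the ``if'' direction I would argue geometrically. Beauville's identification gives $h^1(X,\Omega^1_X\otimes B)=\dim\ker(df_g)_{(X,C)}$, and the kernel contains the tangent space to the fiber $f_g^{-1}(C)$, so it suffices to make $C$ move in a positive-dimensional family of K3 surfaces. If $X\in|-K_Y|$ as in the statement, take $C=X\cap X'$ for a second smooth member $X'\in|-K_Y|$; then every member of the pencil $\langle X,X'\rangle\subset|-K_Y|$ is an anticanonical (hence K3) divisor containing $C$, a positive-dimensional subfamily of $f_g^{-1}(C)$, so $h^1>0$. (Beauville's deformation argument, which the excerpt notes works unchanged for Fano $3$-folds with isolated singularities, guarantees these divisors genuinely move $(X,C)$ in $\p_g$.) For the ``only if'' direction I would use that Clifford index at least $3$ and $B^2\ge 20$ let Ciliberto-Dedieu-Sernesi upgrade Beauville's inequality to an equality, $h^1(X,\Omega^1_X\otimes B)=\dim f_g^{-1}(C)$ \cite[Theorem 2.6]{CDS}; hence $H^1\neq 0$ forces $C$ to lie on a positive-dimensional family of K3 surfaces. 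I would then invoke the structural theorem of Ciliberto-Dedieu-Sernesi, built on Arbarello-Bruno-Sernesi's hyperplane-section theorem \cite{ABShyperplane, CDS}, to promote this family to a Fano $3$-fold $Y$ containing $X$ as a smooth anticanonical divisor with $B=-K_Y|_X$.

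The main obstacle is this last step, and in particular controlling the singularities of $Y$. Manufacturing a Fano $3$-fold (rather than a degenerate ``fake K3'' limit) out of a moving family of K3 surfaces through $C$ is the core of the Arbarello-Bruno-Sernesi/Ciliberto-Dedieu-Sernesi analysis of the non-surjectivity of the Wahl map; the delicate point is to show that the Clifford-index-$\ge 3$ hypothesis -- equivalently, the absence of elliptic curves of low degree -- forces the singular locus of $Y$ to be isolated rather than a curve. This is exactly the dichotomy announced in the introduction (low-degree elliptic curves versus a Fano $3$-fold), and I expect the cleanest route is to cite Ciliberto-Dedieu-Sernesi for both the existence of $Y$ and its singularity type, rather than to reconstruct $Y$ by hand.
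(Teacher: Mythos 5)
Your proposal is correct in outline and coincides with the paper's proof at every structural joint: Riemann--Roch disposes of $B^2<20$; Saint-Donat gives very ampleness of $B$; the absence of curves $E$ with $E^2=0$ and $1\le B\cdot E\le 4$ is converted into $\Cliff(C)\ge 3$ for a smooth $C\in|B|$ (the paper does this via Knutsen's Lemma \ref{knutsen} plus the Hodge index theorem, rather than by direct appeal to Green--Lazarsfeld); and the hard implication is delegated to Ciliberto--Dedieu--Sernesi. The differences lie in which results carry the two implications. For ``only if,'' the paper does not pass through $\dim f_g^{-1}(C)$ at all: it uses the corank formula $\corank(\Phi_C)=h^1(X,\Omega^1_X\otimes B)+1$ \cite[Corollary 2.8]{CDS}, then the arithmetically Gorenstein extension variety $Z\subset\P^{g+r}$ of dimension $r+2$, not a cone, containing both $C$ and $X$ as linear sections \cite[section 2.2]{CDS}, cuts $Z$ by a general linear space through $X$ to produce the 3-fold $Y$, and obtains isolated canonical singularities from \cite[Corollary 5.6]{CDS} using that $Z$ is not a cone and that $Y$ has the smooth hyperplane section $X$ --- so the ``delicate point'' you flag is settled by a specific quotable corollary rather than by a separate analysis of the elliptic-curve hypothesis. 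For ``if,'' the paper cites Lvovski's extension theorem \cite[Theorem 0.1, Lemma 3.5]{CDS}, which applies directly to the fixed pair $(X,Y)$; your pencil argument is exactly the alternative the paper says is available (``extending Mukai's argument''), but as written it still owes the verification that a general member of the pencil through $C$ is a smooth K3 surface and that the resulting tangent vector to $t\mapsto(X_t,C)$ is nonzero in $\p_g$ --- precisely the bookkeeping that quoting Lvovski avoids. Neither variation is a gap; both routes buy the same theorem, with the paper's choice of citations making the singularity control and the nonvanishing at a \emph{fixed} $(X,Y)$ immediate.
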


Note that any curve $E$ in a K3 surface
with $E^2=0$ is a fiber of an elliptic fibration,
for example by Theorem \ref{sd}. Using work of Prokhorov,
the final case of Theorem \ref{fano} implies that $B^2\leq 72$
(Theorem \ref{lowdegree74}).

\begin{proof}
If $B^2<20$, then $H^1(X,\Omega^1_X\otimes B)\neq 0$
by Riemann-Roch (Theorem \ref{under20}). If $X$ is a smooth
anticanonical divisor in some Fano 3-fold $Y$
with at most isolated canonical Gorenstein singularities
such that $B=-K_Y|_X$, then $H^1(X,\Omega^1_X\otimes B)\neq 0$.
This follows from Lvovski's theorem on extensions of projective
varieties \cite[Theorem 0.1, Lemma 3.5]{CDS}. (One could also
prove this by extending Mukai's argument from section \ref{bott20}.)
Conversely, assume that $B^2\geq 20$. We want to show
that if $H^1(X,\Omega^1_X\otimes B)\neq 0$, then $X$
is an anticanonical divisor.

By Theorem \ref{sd}, $B$ is very ample, giving an embedding
$X\subset \P^g$ where $B^2=2g-2$. Choose a smooth hyperplane
section $C$ in $X$ (so $C$ has genus $g$, and the embedding
$C\arrow \P^{g-1}$ is the canonical embedding).

For a line bundle $L$ on a smooth projective curve $C$,
the {\it Clifford index }$\Cliff(C,L)$ is $\deg(L)-2h^0(C,L)+2$.
For $C$ of genus at least 4, the {\it Clifford index }of $C$ is
$$\Cliff(C):=\min \{ \Cliff(C,L):h^0(C,L)\geq 2
\text{ and }h^1(C,L)\geq 2\}.$$

I claim that the curve $C\subset X$ above
has Clifford index at least 3. Several approaches are possible,
but we use the following result of Knutsen, inspired by earlier
work of Green-Lazarsfeld and Martens \cite[Lemma 8.3]{Knutsen},
\cite{GL, Martens}.

\begin{lemma}
\label{knutsen}
Let $B$ be a basepoint-free line bundle on a K3 surface $X$ with
$B^2=2g-2\geq 2$. Let $c$ be the Clifford index of a smooth curve $C$
in $|B|$.

If $c<\lfloor (g-1)/2\rfloor$, then there is a smooth curve $E$ on $X$
such that $0\leq E^2\leq c+2$ and $B\cdot E=E^2+c+2$.
\end{lemma}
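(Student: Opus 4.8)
The plan is to deduce the lemma from the principle of Green and Lazarsfeld that a Clifford index below the generic value $\lfloor(g-1)/2\rfloor$ must be induced by a line bundle on the surface, and then to read off the numerical constraints by a Hodge-index computation. First I would choose a complete, basepoint-free line bundle $A$ on $C$ computing the Clifford index, normalized (after a standard reduction, replacing $A$ by $K_C\otimes A^{-1}$ and stripping base points if need be) so that $\deg A\leq g-1$; write $h^0(C,A)=r+1\geq 2$ and $h^1(C,A)=s+1\geq 2$. Riemann--Roch on $C$ gives $\deg A=g-1+r-s$, hence $c=\Cliff(C,A)=g-1-r-s$, so that $g=c+r+s+1$ and the Brill--Noether number takes the clean form
$$\rho(g,r,\deg A)=g-(r+1)(s+1)=c-rs.$$
The hypothesis now enters purely combinatorially: $c<\lfloor(g-1)/2\rfloor=\lfloor(c+r+s)/2\rfloor$ forces $2c+2\leq c+r+s$, i.e.\ $c\leq r+s-2$; and since $(r-1)(s-1)\geq 0$ gives $rs\geq r+s-1$, we obtain $c\leq rs-1$, that is $\rho<0$.

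Next I would feed $(C,A)$ into the Lazarsfeld--Mukai construction \cite{Huybrechts}. With $\iota\colon C\inj X$, define the locally free sheaf $F$ by the evaluation sequence $0\arrow F\arrow H^0(C,A)\otimes O_X\arrow \iota_*A\arrow 0$ and set $V=F^\vee$, a globally generated bundle of rank $r+1$ with $\det V=O_X(B)$ and $c_2(V)=\deg A$. Riemann--Roch on the K3 surface, applied to $V^\vee\otimes V$, yields $\chi(X,V^\vee\otimes V)=2-2\rho$; since Serre duality gives $h^2(X,V^\vee\otimes V)=h^0(X,V^\vee\otimes V)$ and the identity endomorphism forces $h^0\geq 1$, the inequality $\rho<0$ makes $V$ non-simple. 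The heart of the matter, and the step I expect to be the main obstacle, is to convert this non-simplicity into an honest sub-line bundle: following Green--Lazarsfeld and Martens \cite{GL, Martens}, a non-scalar endomorphism of the globally generated bundle $V$ produces a line bundle $M=O_X(E)$ on $X$ with $h^0(X,M)\geq 2$ and $h^0(X,O_X(B)\otimes M^{-1})\geq 2$ whose restriction $M|_C$ both contributes to and computes the Clifford index of $C$. Keeping track of the effectivity and the moving property of both $M$ and its residual $O_X(B-E)$ is where the real work lies.

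Granting such an $M=O_X(E)$, the rest is formal. Writing $N=B-E$, both $E$ and $N$ are nef with $E^2,N^2\geq 0$, so Riemann--Roch together with the vanishing $h^1=h^2=0$ on a K3 gives $h^0(O_X(E))=2+E^2/2$; the restriction sequence $0\arrow O_X(-N)\arrow O_X(E)\arrow O_C(E)\arrow 0$, using $H^0(X,O_X(-N))=0$ and $H^1(X,O_X(-N))\cong H^1(X,O_X(N))^*=0$, then shows $h^0(C,M|_C)=2+E^2/2$. Since $\deg(M|_C)=E\cdot B=E^2+E\cdot N$, this gives
$$\Cliff(M|_C)=(E^2+E\cdot N)-2\big(1+E^2/2\big)=E\cdot N-2,$$
so that $\Cliff(M|_C)=c$ is equivalent to $E\cdot N=c+2$, i.e.\ to $B\cdot E=E^2+c+2$, the asserted relation. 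For the bound, $(E+N)^2=B^2$ gives $E^2+N^2=2(g-c-3)\geq 0$, while the Hodge index theorem gives $E^2\cdot N^2\leq(E\cdot N)^2=(c+2)^2$; these two facts forbid both $E^2$ and $N^2$ from exceeding $c+2$, and since $N|_C$ computes the Clifford index as well, I may interchange $E$ and $N$ if necessary to arrange $0\leq E^2\leq c+2$. Finally, Saint-Donat's theorem (Theorem \ref{sd}) shows the moving part of $|E|$ is basepoint-free, so a general member is a smooth curve (an elliptic curve when $E^2=0$) of the same class, which is the curve required by the lemma.
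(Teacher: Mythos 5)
First, note that the paper does not actually prove this lemma: it is quoted verbatim from Knutsen \cite[Lemma 8.3]{Knutsen}, with Green--Lazarsfeld and Martens \cite{GL, Martens} cited as the source of the method. So you are reconstructing a proof from the literature rather than competing with an argument in the paper. Your numerology is correct at both ends: the normalization $c=g-1-r-s$, the identity $\rho=c-rs$, the deduction $\rho<0$ from $c<\lfloor(g-1)/2\rfloor$, the formula $\chi(X,V^\vee\otimes V)=2-2\rho$ forcing non-simplicity, and the closing computation $\Cliff(C,E|_C)=E\cdot(B-E)-2$ together with the Hodge-index argument that at most one of $E^2,(B-E)^2$ can exceed $c+2$. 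These are all as in the standard treatment.

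The proposal nevertheless has a genuine gap, and it is exactly the one you flag yourself: the passage from a non-simple Lazarsfeld--Mukai bundle to a line bundle $O_X(E)$ with $h^0(E)\geq 2$, $h^0(B-E)\geq 2$, and $E|_C$ computing $\Cliff(C)$ is the entire content of the Green--Lazarsfeld theorem, and saying that ``this is where the real work lies'' does not discharge it. A second, related problem is the assertion that $E$ and $N=B-E$ are nef because $h^0\geq 2$: this is false on a K3 surface, where an effective class with two sections can still have negative degree on a $(-2)$-curve in its base locus. The standard fix is to replace $E$ by the moving part of $|E|$ (and similarly for $N$), but that operation changes $E^2$ and $B\cdot E$, so the identities $h^0(O_X(E))=2+E^2/2$ and $B\cdot E=E^2+c+2$ must be re-established for the modified class; one must also check $h^1(O_X(-N))=0$ when $N^2=0$, and handle the case where the nef class of square zero is non-primitive (where $h^1(O_X(E))\neq 0$ and your Riemann--Roch count of sections fails). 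This bookkeeping is precisely what Knutsen's Lemma 8.3 adds to Green--Lazarsfeld, and it is the part of the argument your proposal leaves out. As it stands, the proposal is a correct roadmap whose central and most delicate steps are deferred to the very references the paper already cites for the lemma.
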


Since we have $B^2\geq 20$, Lemma \ref{knutsen} together with the Hodge index
theorem implies that if $C$ has Clifford index at most 2,
then the curve $E$ given by the lemma
has $E^2=0$ and hence $1\leq B\cdot E\leq 4$,
contradicting our assumptions. So $C$ has Clifford index at least 3.

For a smooth projective curve $C$, the {\it Wahl map}
$$\Phi_C\colon \Lambda^2H^0(C,K_C)\arrow H^0(C,K_C^{\otimes 3})$$
is defined by $s\wedge t\mapsto s\, dt-t\, ds$. Wahl showed
that the Wahl map of a curve contained in some K3 surface
is not surjective; that is, $\corank(\Phi_C)\geq 1$ \cite{Wahl}.
When $g\geq 11$
and $\Cliff(C)\geq 3$ (as here), Ciliberto-Dedieu-Sernesi
proved the more precise statement:
$$\corank(\Phi_C)=h^1(X,\Omega^1_X\otimes B)+1$$
\cite[Corollary 2.8]{CDS}.

Let $r=h^1(X,\Omega^1\otimes B)$. By Ciliberto-Dedieu-Sernesi,
again using that $g\geq 11$ and $\Cliff(C)\geq 3$,
there is an arithmetically Gorenstein normal variety $Z$
of dimension $r+2$ in $\P^{g+r}$, not a cone,
containing the curve $C\subset \P^{g-1}$ as the section
by a linear space of dimension $g-1$. Moreover, $Z$ contains
$X\subset \P^g$ as the section by a linear space of dimension
$g$ \cite[section 2.2]{CDS}.

Thus, if $H^1(X,\Omega^1\otimes B)\neq 0$, then $Z$
has dimension at least 3. Let $Y$ be the intersection of $Z$
with a general linear space of dimension $g+1$ that contains
$X$; then $Y\subset \P^{g+1}$ is an arithmetically Gorenstein 3-fold
with $-K_Y=O(1)$. Because
$Y$ has a smooth hyperplane section $X$ and $Z$ is not a cone,
$Y$ has at most
isolated canonical singularities \cite[Corollary 5.6]{CDS}.
Theorem \ref{fano} is proved.
\end{proof}

\section{K3 surfaces of high degree}

In the rest of the paper, we analyze which ample line bundles $B$
on a K3 surface $X$ have $H^1(X,\Omega^1\otimes B)=0$,
without assuming that $X$ has Picard number 1. We give complete answers
when $B$ has high enough degree. This section proves a first step,
as an immediate consequence of Prokhorov's work on Fano 3-folds:
in high degrees, the locus where $H^1(X,\Omega^1\otimes B)\neq 0$
is contained in the locus of K3 surfaces
that contain ``low-degree'' elliptic curves.
This is analogous to Saint-Donat's theorem on very ampleness,
Theorem \ref{sd}. Theorem \ref{degree4} will analyze
the case when there is a low-degree elliptic curve.

\begin{theorem}
\label{lowdegree74}
Let $B$ be an ample line bundle on a complex K3 surface $X$
with $B^2\geq 74$. If $H^1(X,\Omega^1_X\otimes B)\neq 0$,
then there is a curve $E$ in $X$ with $E^2=0$ and $1\leq B\cdot E\leq 4$.
\end{theorem}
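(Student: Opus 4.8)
The plan is to prove the contrapositive by combining Theorem~\ref{fano} with Prokhorov's sharp bound on the anticanonical degree of a Fano 3-fold. So I would assume that there is no curve $E$ in $X$ with $E^2=0$ and $1\leq B\cdot E\leq 4$, and deduce that then $H^1(X,\Omega^1_X\otimes B)=0$. Under this assumption Theorem~\ref{fano} applies, and since $B^2\geq 74$ rules out the possibility $B^2<20$, it tells me that if $H^1(X,\Omega^1_X\otimes B)\neq 0$ then $X$ must be a smooth anticanonical divisor in some Fano 3-fold $Y$ with at most isolated canonical Gorenstein singularities, with $B=-K_Y|_X$.

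Granting that, the next step is a degree computation. Since $X$ represents the class $-K_Y$ and $B=-K_Y|_X$, intersection theory on $Y$ gives
$$B^2=(-K_Y|_X)^2=(-K_Y)^2\cdot[X]=(-K_Y)^3,$$
so $B^2$ is exactly the anticanonical degree of the Fano 3-fold $Y$. At this point I would invoke Prokhorov's theorem that a Fano 3-fold with (at worst) canonical Gorenstein singularities satisfies $(-K_Y)^3\leq 72$; the variety $Y$ produced above has only isolated canonical Gorenstein singularities, so it is a special case and the bound applies. This forces $B^2\leq 72$, contradicting $B^2\geq 74$. Hence the anticanonical case of Theorem~\ref{fano} cannot occur, so $H^1(X,\Omega^1_X\otimes B)=0$; equivalently, if that group is nonzero then the desired curve $E$ exists.

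I do not expect any real obstacle once Theorem~\ref{fano} and Prokhorov's bound are in hand: the only substantive inputs are those two results, and what remains is the elementary identity $B^2=(-K_Y)^3$ together with the observation that $Y$ is a genuine Fano 3-fold in Prokhorov's sense (its anticanonical class is the very ample $O(1)$ coming from the construction in Theorem~\ref{fano}). The one point worth flagging is the exact numerical threshold. Because the intersection form on a K3 surface is even, $B^2$ is always an even integer, so the hypothesis $B^2\geq 74$ says precisely that $B^2$ exceeds Prokhorov's bound of $72$; this even-lattice fact is why $74$, rather than $73$, is the natural cutoff produced by this argument, and the fact that Prokhorov's bound of $72$ is attained shows that the method cannot push the threshold lower.
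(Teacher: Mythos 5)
Your proposal is correct and follows the paper's own argument essentially verbatim: apply Theorem~\ref{fano} (the case $B^2<20$ being excluded), identify $B^2$ with $(-K_Y)^3$, and contradict Prokhorov's bound $(-K_Y)^3\leq 72$. Your added remark that the evenness of the K3 intersection form is what makes $74$ the natural threshold is a nice touch, consistent with the paper's examples showing the bound is attained at $72$.
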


As mentioned earlier, any such curve $E$ is a fiber of an elliptic fibration
of $X$.

\begin{proof}
Suppose that there is no curve $E$ in $X$ with $E^2=0$
and $1\leq B\cdot E\leq 4$, and that $H^1(X,\Omega^1_X\otimes B)$
is not zero. By Theorem \ref{fano}, $X$ is a smooth anticanonical
divisor in some Fano 3-fold $Y$
with at most isolated canonical Gorenstein singularities
such that $B=-K_Y|_X$.

Prokhorov showed that a Fano 3-fold $Y$
with canonical Gorenstein singularities has $(-K_Y)^3\leq 72$
\cite[Theorem 1.5]{Prokhorov}. (For comparison, a smooth
Fano 3-fold $Y$ has $(-K_Y)^3\leq 64$.)
So we reach a contradiction if $B^2\geq 74$.
\end{proof}

The degree bound 74 in Theorem \ref{lowdegree74} is sharp,
by the following example.

\begin{example}
Let $X$ be the double cover of $\P^2$ ramified over a very general
sextic curve. Let $A$ be the pullback of the line bundle
$O_{\P^2}(1)$. Here $(X,A)$ is a polarized K3 surface of degree 2
and Picard number 1. Then
the line bundle $B=6A$ has $B^2=72$ and $H^1(X,
\Omega^1_X\otimes B)\neq 0$, while there is no curve $E$
in $X$ with $E^2=0$ and $1\leq B\cdot E\leq 4$.
(Thus Theorem \ref{lowdegree74} fails in degree 72.)

Proof: By the assumption of generality, $\Pic(X)=\Z\cdot A$.
So there is no curve $E$ in $X$ with $E^2=0$.

By considering the graded ring associated to $A$, $X$
embeds as a hypersurface of degree 6 in the weighted projective space
$Y=P(3,1,1,1)$.
Here $Y$ is a Fano 3-fold with canonical Gorenstein singularities,
$-K_Y=O(6)$,
and $(-K_Y)^3=72$ \cite[Theorem 1.5]{Prokhorov}. In particular,
$B=-K_Y|_X$.

By Lvovski's theorem as in the proof of Theorem \ref{fano},
because $(X,B)$
is an anticanonical section of a Fano 3-fold $Y$, we have
$H^1(X,\Omega^1_X\otimes B)\neq 0$. (Ciliberto-Lopez-Miranda claimed
that $H^1(X,\Omega^1_X\otimes B)=0$ in this case, because of an error
in the proof of \cite[Lemma 2.3(e)]{CLM}: in the description
of the tangent bundle of a ramified cover, $N_{\pi}$ should
be $\pi^*O_B(6)$, not $\pi^*O_B(3)$.)

Thus we have examples of K3 surfaces $X$
with Picard number 1 and an ample line bundle $B$ of degree 72
such that $H^1(X,\Omega^1_X\otimes B)\neq 0$, showing
the optimality of Theorem \ref{lowdegree74}. This does not contradict
Theorem \ref{lowdegree20}, because $B=6A$ is not primitive.
\end{example}

\begin{example}
There is a K3 surface $X$ with a {\it primitive }ample
line bundle $B$ of degree 62 such that $H^1(X,\Omega^1_X\otimes B)\neq 0$
and there is no curve $E$ in $X$ with $E^2=0$
and $1\leq B\cdot E\leq 4$.
(Thus Theorem \ref{lowdegree74} fails for primitive ample
line bundles of degree 62.)

Proof: Let $Y$
be the Fano 3-fold $P(O\oplus O(2))\arrow \P^2$, which has
$(-K_Y)^3=62$ and $-K_Y$ primitive.
Let $X$ be a smooth divisor in the linear system
of $-K_Y$. Then $X$ is a K3 surface with a primitive ample
line bundle $B=-K_Y|_X$ of degree 62, and $H^1(X,\Omega^1\otimes B)\neq 0$
by Lvovski's theorem again. For $X$ very general, the restriction
homomorphism $\Pic(Y)=\Z\{R,S\}\arrow \Pic(X)$ is an isomorphism. Given that,
it is straightforward to compute the intersection form on $X$ (it has 
$R^2=2$, $RS=5$, and $S^2=10$). This quadratic form
does not represent zero nontrivially,
and so $X$ contains no curve $E$ with $E^2=0$.
Thus Theorem \ref{lowdegree74}
fails for $(X,B)$, as promised.
\end{example}

\section{Elliptic K3 surfaces}
\label{ellipticsection}

We now analyze which K3 surfaces $(X,B)$ have
$H^1(X,\Omega^1_X\otimes B)\neq 0$
when there is a curve $E$ in $X$ with
$E^2=0$ and $1\leq B\cdot E\leq 4$; these are the cases
left out of Theorem \ref{lowdegree74}. The answer is complete
if $B\cdot E=1$ or also if $B^2$ is large enough (with
explicit bounds). Surprisingly, the answer depends on
whether an elliptic fibration of $X$ has a certain
special type of singular fiber.

In particular, when $1\leq B\cdot E\leq 3$,
we give examples with $B^2$ arbitrarily large such that
$H^1(X,\Omega^1_X\otimes B)\neq 0$, showing that these cases are genuine
exceptions to Theorem \ref{lowdegree74}. By contrast,
when $B\cdot E=4$, this cohomology group is in fact
zero for $B^2\geq 194$. (This bound is probably not optimal.)

\begin{theorem}
\label{degree4}
Let $B$ be an ample line bundle on a complex K3 surface $X$.
Suppose that there is a curve $E$ in $X$ with $E^2=0$
and $1\leq B\cdot E\leq 4$.
Let $\pi\colon X\arrow \P^1$ be the elliptic fibration
associated to $E$. If $r=1$ and $\pi$ has a fiber of type II,
or $r=2$ and $\pi$ has a fiber of type III, or $r=3$ and $\pi$
has a fiber of type IV, then $H^1(X,\Omega^1_X\otimes B)\neq 0$.
The converse holds if in addition $r=1$ and $B^2\geq 40$,
or $r=2$ and $B^2\geq 92$, or $r=3$ and $B^2\geq 140$,
or $r=4$ and $B^2\geq 194$.
\end{theorem}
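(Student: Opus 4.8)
The plan is to dualize and then push the whole computation down the elliptic fibration $\pi\colon X\to \P^1$. First I would use Serre duality on the K3 surface to replace the target by $H^1(X,\Omega^1_X\otimes B)^*\cong H^1(X,TX\otimes B^{-1})$, and record that $B\cdot E=r$, so $B$ has degree $r$ on every fiber. Saturating the relative tangent line bundle inside $TX$ produces the fundamental sequence
$$0\to O_X(-2E)\to TX\to O_X(2E)\otimes I_W\to 0,$$
where $O_X(2E)=\omega_{X/\P^1}$ and $I_W$ is the ideal of the critical scheme $W$, a length-$24$ subscheme (indeed $\deg W=c_2(X)=24$, with each singular fiber contributing its Milnor number). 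Twisting by $B^{-1}$ and pushing down, every direct image in degree $0$ vanishes because the fiberwise degree is negative, so $\pi_*(TX\otimes B^{-1})=0$ and the Leray sequence collapses to $H^1(X,TX\otimes B^{-1})\cong H^0(\P^1,\mathcal F)$, where $\mathcal F:=R^1\pi_*(TX\otimes B^{-1})$ is a rank-$2r$ sheaf on $\P^1$.

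Next I would resolve $\mathcal F$. Pushing the displayed sequence down gives $0\to \mathcal G(-2)\to \mathcal F\to \mathcal H\to 0$ with $\mathcal G:=R^1\pi_* O_X(-B)$ and $\mathcal H:=R^1\pi_*(O_X(2E-B)\otimes I_W)$; pushing $0\to I_W\to O_X\to O_W\to 0$ down gives $0\to \pi_* O_W\to \mathcal H\to \mathcal G(2)\to 0$ with $\pi_* O_W$ a length-$24$ torsion sheaf. Relative duality identifies $\mathcal G^\vee=\pi_*(O_X(B)\otimes\omega_{X/\P^1})=\pi_* O_X(B+2E)$, which is nef by Fujita's semipositivity theorem since $O_X(B)$ is ample; hence $\mathcal G=\oplus_i O(-b_i)$ with all $b_i\geq 0$, so $H^0(\P^1,\mathcal G(-2))=0$ and $H^0(\P^1,\mathcal G(2))$ is controlled by the splitting type. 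Assembling the long exact sequences, $h^1(X,\Omega^1_X\otimes B)$ equals the dimension of the kernel of a connecting map $H^0(\P^1,\mathcal H)\to H^1(\P^1,\mathcal G(-2))$, where $H^0(\P^1,\mathcal H)$ is an extension of $H^0(\P^1,\mathcal G(2))$ by the $k^{24}$ recording the critical points. Crucially this connecting map is local at each critical value, so the question reduces to which local pieces of $\pi_* O_W$ survive. The degree bounds enter here, to force $\mathcal G$ negative enough that the free-part contribution $H^0(\P^1,\mathcal G(2))$ drops out, leaving only the torsion.

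For the nonvanishing (forward) direction, which carries no degree hypothesis, I would argue purely locally near the distinguished fiber $F_0$. When $r=1,2,3$ and $F_0$ is of type II, III, IV respectively, ampleness together with $B\cdot F_0=r$ forces $B$ to have degree exactly $1$ on each of the $r$ components, all components pass through a single point $p$, and $W$ is concentrated at $p$ with length $e(F_0)=r+1$. I would produce a nonzero local section of $\mathcal F$ supported at $\pi(p)$ by exhibiting a class in the local piece of $\pi_* O_W$ annihilated by the local connecting map into $\mathcal G(-2)$; the matching $e(F_0)=r+1$, reflecting the cuspidal, tangential, or triple-point geometry, is exactly what makes such a class exist and survive.

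I expect the converse to be the main obstacle: under the degree bounds, $H^0(\P^1,\mathcal F)\neq 0$ must force one of these three fibers and nothing else. This demands a uniform case-by-case local computation, over every Kodaira fiber type and every way of distributing the $B$-degree $r$ among its components, of whether the corresponding summand of $\pi_* O_W$ lies in the kernel of the local connecting map into $\mathcal G(-2)$. The anticipated outcome is that survival happens precisely for a fiber concentrated at one point with Milnor number $r+1$, i.e.\ type II, III, IV for $r=1,2,3$; for $r=4$ no Kodaira fiber has Milnor number $5$ concentrated at a point (the unique $e=5$ fiber is the cycle $I_5$), which is exactly why the $r=4$ case asserts vanishing with no exceptional fiber. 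Making this rigorous in the stated ranges also requires controlling the splitting type $\mathcal G=\oplus O(-b_i)$ well enough to kill $H^0(\P^1,\mathcal G(2))$ and to see the connecting map is injective on the free part; the explicit bounds $40,92,140,194$ should emerge from forcing all $b_i$ large relative to $r$, equivalently ruling out low-degree multisections of $\pi$. The delicate bookkeeping of the local torsion contributions, uniformly across all fiber types and all degree distributions, is where I expect the real work to lie.
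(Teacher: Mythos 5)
Your setup is essentially the Serre dual of the paper's reduction: the paper works with $0\to\pi^*\Omega^1_{\P^1}\to\Omega^1_X\to\Omega^1_{X/\P^1}\to 0$ and $0\to\Omega^1_{X/\P^1}\to\omega_{X/\P^1}\to\omega_{X/\P^1}|_S\to 0$, twists by $B$, and uses Kawamata--Viehweg on $B\pm 2E$ to show that $H^1(X,\Omega^1_X\otimes B)\neq 0$ if and only if the degree-$24$ critical scheme $S$ (your $W$) fails to impose independent conditions on $|B+2E|$; your Leray/connecting-map formulation of ``which local pieces of $\pi_*O_W$ survive'' is the same question in dual form, and your sequence $0\to O(-2E)\to TX\to O(2E)\otimes I_W\to 0$ is correct. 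The problem is that you stop at the point where the actual proof begins. You have not proved either direction: the forward direction is asserted via ``the matching $e(F_0)=r+1$ is exactly what makes such a class exist and survive,'' and the converse is explicitly deferred (``the anticipated outcome is\dots,'' ``where I expect the real work to lie''). In the paper the forward direction is a concrete dimension count: $S_0\subset E_0$ has length $r+1$, while $B+2E$ has degree $1$ on each of the $r$ components of $E_0$, so $h^0(E_0,B+2E)=r<r+1$ and restriction to $S_0$ cannot be surjective. You would need to carry out the equivalent local computation in your dual picture, and also justify that your connecting map really is local at each critical value.

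The more serious gap is the converse. Your plan requires controlling the splitting type of $\mathcal G=R^1\pi_*O_X(-B)$, equivalently of the scroll $\pi_*O_X(B)=\oplus O(e_i)$, and then checking surjectivity of the local restriction maps for every Kodaira fiber type and every distribution of the degree $r$ among its components (e.g.\ $r=4$ with a type IV fiber and degrees $(2,1,1)$). The first of these is exactly the content of the Reid--Brawner--Stevens analysis of hyperelliptic, trigonal, and tetragonal K3 surfaces, which the paper imports as Theorem \ref{nef}: under the hypotheses $L^2\geq 8,14,26$ for $r=2,3,4$, the bundle $L=B-21E$ is nef with $h^0(L)-h^0(L-E)=r$, whence $H^0(X,L)\to H^0(E_0,L)$ is surjective for every fiber. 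The numerical thresholds $92,140,194$ are precisely $42r$ plus these bounds (the shift by $21E$ being what allows sections of $O(23E)$ to separate the at most $24$ singular fibers). Without this input your ``forcing all $b_i$ large'' step has no mechanism behind it, and the stated bounds cannot emerge from the argument as written. So the proposal is a reasonable strategy outline in a dual language, but it is not a proof: both the structural result on $\pi_*O_X(B)$ and the fiber-by-fiber local computations are missing.
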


In Kodaira's classification of the singular fibers
of an elliptic surface, type II is a cuspidal cubic curve,
type III is two copies of $\P^1$ tangent at a point,
and type IV is three copies of $\P^1$ through a point
\cite[Corollary 5.2.3]{CD}.

We first consider the case where $B\cdot E=1$, in which case
$(X,B)$ is said to be {\it unigonal}. In this case,
we have an even stronger statement than Theorem \ref{degree4}:
we can describe exactly when $H^1(X,\Omega^1_X\otimes B)$ is not zero,
without having to assume that $B^2\geq 40$.
Most of the proof of the following theorem
was suggested by Ben Bakker.

\begin{theorem}
\label{degree1}
Let $B$ be an ample line bundle on a complex K3 surface $X$.
Suppose that there is a curve $E$ in $X$ with $E^2=0$
and $B\cdot E=1$. Let $\pi\colon X\arrow \P^1$ be the elliptic fibration
associated to $E$. Then $H^1(X,\Omega^1_X\otimes B)\neq 0$
if and only if $B^2\leq 38$ or some fiber of $\pi$
is of type II (a cuspidal cubic).
\end{theorem}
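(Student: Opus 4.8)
The plan is to work directly with the elliptic fibration $\pi\colon X\arrow\P^1$. First I would record the unigonal structure. Since $B\cdot E=1$ and $B$ is ample, every fiber is irreducible (a reducible fiber would have a component $C$ with $B\cdot C\geq 1$, forcing $B\cdot E\geq 2$), so the only singular fibers are nodal ($I_1$) or cuspidal (type II). The fibration has a section $\Gamma$, a $(-2)$-curve with $\Gamma\cdot F=1$ (writing $F$ for the fiber class $E$), and $B=gF+\Gamma$ with $B^2=2g-2$. Because $\Gamma\cdot F=1$, the section $\Gamma$ meets each fiber at a smooth point, hence avoids every node and cusp.

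Next I would pass to relative differentials. The relative cotangent sequence reads \[0\arrow O_X(-2F)\arrow\Omega^1_X\arrow\Omega^1_{X/\P^1}\arrow 0.\] Since $B$ has degree $1$ on every fiber, $\pi_*B=O_{\P^1}(g)$ and $R^1\pi_*B=0$, so Leray gives $H^1(X,O_X(-2F)\otimes B)=H^2(X,O_X(-2F)\otimes B)=0$ and hence $H^1(X,\Omega^1_X\otimes B)\cong H^1(X,\Omega^1_{X/\P^1}\otimes B)$. As each fiber is reduced with an isolated hypersurface singularity, $\Omega^1_{X/\P^1}$ is torsion-free of rank $1$ and fits in \[0\arrow\Omega^1_{X/\P^1}\arrow\omega_{X/\P^1}\arrow\mathcal T\arrow 0,\] where $\omega_{X/\P^1}=O_X(2F)$ and $\mathcal T$ is supported at the singular points, with stalk the Milnor algebra $O_{X,p}/(\partial t/\partial u,\partial t/\partial v)$ of the fiber equation $t$. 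A Chern class computation (or $e(X)=24$) shows $\mathcal T$ has length $24$; a node contributes a reduced point, while a cusp contributes the length-$2$ curvilinear scheme tangent to the fiber.

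Then I would set $M=O_X(2F)\otimes B=(g+2)F+\Gamma$, which is ample (ample plus nef), so $H^1(X,M)=0$ and the displayed sequence identifies $H^1(X,\Omega^1_X\otimes B)$ with the cokernel of the restriction $\rho\colon H^0(X,M)\arrow H^0(X,\mathcal T)$ into a $24$-dimensional space. The key point is that $h^0(X,M)=h^0(X,(g+2)F)=g+3$, so $\Gamma$ is the fixed part of $|M|$ and every section of $M$ has the form $s_\Gamma\cdot\pi^*f$ with $f\in H^0(\P^1,O(g+2))$. Because the local fiber equation is quasi-homogeneous, the singular subscheme $\operatorname{Spec}O_{X,p}/J$ lies inside its fiber, so $s_\Gamma$ is a unit there and $\pi^*f$ restricts to the constant $f(\pi(p))$. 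Thus $\rho$ sees only the $24-c$ distinct base values (where $c$ is the number of cusps) and, at each cusp, never the tangent direction; hence $\rank\rho=\min(24-c,g+3)$ and \[h^1(X,\Omega^1_X\otimes B)=\max(c,\,21-g).\] This is nonzero exactly when $c\geq 1$ (a type II fiber) or $g\leq 20$ (that is, $B^2\leq 38$), which is the theorem.

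I expect the main obstacle to be the local study of $\Omega^1_{X/\P^1}$ at the singular fibers: identifying $\mathcal T$ with the Milnor algebra and, above all, recognizing that the length-$2$ scheme at a cusp is tangent to the fiber. This is exactly what makes the cuspidal contribution invisible to $|M|$, whose moving part is pulled back from $\P^1$ and so cannot separate directions inside a fiber, whereas a node imposes only one ordinary point-condition that is absorbed once $g\geq 21$. A secondary point needing care is the claim that $\Gamma$ is precisely the fixed part of $|M|$, which I would extract from the dimension count $h^0(M)=g+3$ together with $R^1\pi_*B=0$.
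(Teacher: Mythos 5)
Your proposal is correct and follows essentially the same route as the paper: both use the two exact sequences relating $\Omega^1_X$, $\Omega^1_{X/\P^1}$, and $\omega_{X/\P^1}\cong O(2E)$ to reduce the question to whether the degree-$24$ non-smooth subscheme imposes independent conditions on $|B+2E|$, whose members are exactly $B_0$ plus pullbacks from $\P^1$, and both handle the node/cusp dichotomy by the same observation that the length-$2$ scheme at a cusp lies inside its fiber and so cannot be separated by pulled-back sections. The only cosmetic differences are that you use Leray and $R^1\pi_*$ in place of the paper's Kawamata--Viehweg vanishing for $H^i(X,B-2E)$, and that you record the sharper corank formula $\max(c,\,21-g)$ rather than just the vanishing criterion.
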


In particular, there are polarized K3 surfaces $(X,B)$ with $B^2$ arbitrarily
large such that there is a curve $E$ in $X$ with $E^2=0$
and $B\cdot E=1$ while $H^1(X,\Omega^1\otimes B)\neq 0$.
To construct such examples, let $\pi\colon X\arrow \P^1$
be an elliptic K3 surface with a section $B_0$ such that
there are 22 fibers of type I$_1$ (a nodal cubic)
and one fiber of type II (a cuspidal cubic).
Such a surface is easy to construct, using a Weierstrass equation.
Let $E$ be a fiber of $\pi$; then $B_0^2=-2$, $E^2=0$, and $B_0\cdot E=1$.
For any integer $m\geq 2$, it is straightforward to check
that $B:=B_0+mE$ is ample, and we have
$B^2=2m-2$ and $B\cdot E=1$. Since $B\cdot E=1$, $B$ is primitive.
By Theorem \ref{degree1},
$H^1(X,\Omega^1_X\otimes B)$ is not zero, no matter how big $B^2$ is.

Theorem \ref{degree1} shows that the locus of polarized K3
surfaces $(X,B)$ with $H^1(X,\Omega^1_X\otimes B)\neq 0$
is not a Noether-Lefschetz locus when there is an elliptic
curve of low degree. (That is, this property
cannot always be read from the Picard lattice of $X$.)
Indeed, the condition that an elliptic fibration $\pi\colon X\arrow \P^1$
has a cuspidal fiber is not determined by the Picard lattice
of $X$. A general elliptic K3 surface as in the previous paragraph has
Picard lattice $\Z\cdot\{B_0,E\}$ with $B_0^2=-2$, $E^2=0$,
and $B_0\cdot E=1$,
whether there is a cuspidal fiber or not.

\begin{proof}
(Theorem \ref{degree1})
By the Riemann-Roch calculation in Theorem \ref{under20},
we know that $H^1(X,\Omega^1_X\otimes B)\neq 0$ if $B^2<20$.
So we can assume from now on that $B^2\geq 20$.

Every fiber of $\pi\colon X\arrow \P^1$ is an effective divisor
linearly equivalent to $E$. Since $B$ is ample and $B\cdot E=1$,
every fiber of $\pi$ is irreducible and has multiplicity 1.
By Kodaira's classification, every singular fiber of $\pi$ is of type
I$_1$ (a nodal cubic) or II (a cuspidal cubic).

By Riemann-Roch, $B$ can be represented by an effective divisor.
Since $B\cdot E=1$, this divisor must be the sum of a section
$B_0$ of $\pi$ with some curves supported in fibers.
Then $B_0\cong \P^1$, and so $B_0^2=-2$.
Because all fibers of $\pi$
are irreducible, it follows that $B$ is linearly equivalent to
$B_0+mE$, where $B^2=2m-2$.

We have the following exact sequence of coherent sheaves on $X$:
$$0\arrow \pi^*\Omega^1_{\P^1}\arrow \Omega^1_X\arrow \Omega^1_{X/\P^1}
\arrow 0.$$
The sheaf $\Omega^1_{X/\P^1}$ of relative K\"ahler differentials
is torsion-free but not reflexive, by a direct computation
at singular fibers of $\pi$. It is
related to the relative dualizing sheaf $\omega_{X/\P^1}$ (a line bundle)
by another exact sequence:
$$0\arrow \Omega^1_{X/\P^1}\arrow \omega_{X/\P^1}\arrow \omega_{X/\P^1}|_S
\arrow 0,$$
where $S$ is the non-smooth locus of $\pi$. Here $S$ is
a closed subscheme of degree 24 in $X$, supported at the singular
points of fibers of $\pi$. (One can compute directly that $S$ has degree 2
at cusps and degree 1 at nodes.)

Since the line bundle $\Omega^1_{\P^1}$ is isomorphic to $O(-2)$,
$\pi^*\Omega^1_{\P^1}$ is isomorphic to $O(-2E)$. Tensoring
the first exact sequence with $B$ and taking cohomology gives
an exact sequence of complex vector spaces:
$$H^1(X,B-2E)\arrow H^1(X,\Omega^1_X\otimes B)\arrow
H^1(X,\Omega^1_{X/\P^1}\otimes B)
\arrow H^2(X,B-2E).$$
We arranged that $B^2\geq 20$, and so $m\geq 11$. (For what follows,
$m\geq 4$ would be enough.) Therefore, $B-2E
=B_0+(m-2)E$ is nef and big. So $H^1(X,B-2E)=H^2(X,B-2E)=0$
by Kawamata-Viehweg vanishing. We deduce that $H^1(X,\Omega^1_X\otimes B)$
maps isomorphically to $H^1(X,\Omega^1_{X/\P^1}\otimes B)$.

Outside the 0-dimensional subscheme $S$ of $X$, the first exact sequence
above is an exact sequence of vector bundles. Taking determinants
shows that $\Omega^1_{X/\P^1}$ is isomorphic to $O(2E)$ outside $S$,
using that $K_X$ is trivial. Because $\omega_{X/\P^1}$ is a line bundle
on all of $X$, it follows that $\omega_{X/\P^1}\cong O(2E)$. So the second
exact sequence (tensored with $B$) gives a long exact sequence of cohomology:
$$H^0(X,O(B+2E))\arrow H^0(S,O(B+2E))\arrow H^1(X,\Omega^1_X\otimes B)
\arrow H^1(X,O(B+2E)).$$
Here $B+2E$ is nef and big, and so the last cohomology group is zero.
We conclude that $H^1(X,\Omega^1_X\otimes B)=0$ {\it if and only if
the subscheme $S$ imposes linearly independent conditions
on sections of the line bundle $B+2E$.} Thus for
elliptic K3s, Bott vanishing reduces to a question about
sections of a line bundle, which is much easier to analyze.

In the case at hand, we can describe all sections of $B+2E=B_0+(m+2)E$
explicitly. We have $h^0(L)=(L^2+4)/2$ for $L$ nef and big
on a K3 surface $X$, and so $h^0(B+2E)=m+3$.
But we get an $(m+3)$-dimensional
space of sections of $O(B+2E)=O(B_0)\otimes \pi^*O(m+2)$
by pulling back sections of $O(m+2)$ on $\P^1$, and so those
are all the sections. In other words, the linear
system of $B+2E$ is exactly the set of divisors
$B_0+E_1+\cdots+E_{m+2}$ for some fibers
$E_1,\ldots, E_{m+2}$ of $\pi$.

If $\pi$ has a fiber $E_0$ with a cusp $p$, then the subscheme $S$
has degree 2 at $p$ (and is contained in $E_0$); so $S$ does
not impose linearly independent conditions on sections of $B+2E$
in this case. Otherwise, all singular fibers of $\pi$ have
a single node, and so $S$ consists of 24 points in distinct
fibers of $\pi$. It follows that $S$ imposes linearly independent
conditions on sections of $B+2E$ if and only if $m+2\geq 23$,
that is, $B^2\geq 40$.
\end{proof}

We now address the cases where $B\cdot E$ is 2, 3, or 4.
The K3 surface $(X,B)$ is said to be {\it hyperelliptic},
{\it trigonal}, or {\it tetragonal}, respectively (because
all smooth curves in the linear system of $B$ have the given
gonality).

Before proving Theorem \ref{degree4}, we use it to give examples
such that $B\cdot E$ is 1, 2, or 3
and $H^1(X,\Omega^1_X\otimes B)\neq 0$
for arbitrarily
large values of $B^2$, in contrast to Theorem \ref{lowdegree74}.
(This was done above when $B\cdot E=1$.)
When $B\cdot E=4$, by contrast, the theorem says that
$H^1(X,\Omega^1_X\otimes B)=0$ whenever $B^2\geq 194$.

\begin{example}
There are polarized K3 surfaces $(X,B)$ with $B^2$ arbitrarily
large such that there is a curve $E$ in $X$ with $E^2=0$ and $B\cdot E=2$,
while $H^1(X,\Omega^1_X\otimes B)\neq 0$.

Let $X$ be the double cover of $Y=\P^1\times \P^1$
ramified along a smooth curve $D$ in the linear system
of $-2K_Y=O(4,4)$. Then $X$ is a K3 surface, with two elliptic fibrations
defined by the two compositions $X\arrow Y\arrow \P^1$.
Write $\pi\colon X\arrow \P^1$ for the first fibration,
$E$ for a fiber of $\pi$, and $C_0$ for a fiber of the second
fibration; then $C_0\cdot E=2$. Let $S$ be the non-smooth locus
of $\pi$, a closed subscheme
of degree 24 in $X$.
By choosing $D$ to have intersection with one curve
$p\times \P^1$ equal to a single point with multiplicity 4,
we can arrange that the corresponding fiber of $\pi$
is of type III (two $\P^1$s tangent at one point).
By Theorem \ref{degree4}, $H^1(X,\Omega^1_X\otimes B)\neq 0$,
while $B^2=4m$ can be arbitrarily large. (One can give a similar
example with $B^2\equiv 2\Mod{4}$ by taking $X$ to be a double cover
of $P(O\oplus O(1))\arrow \P^1$, rather than of $\P^1\times \P^1$.)
\end{example}

\begin{example}
There are polarized K3 surfaces $(X,B)$ with $B^2$ arbitrarily
large such that there is a curve $E$ in $X$ with $E^2=0$ and $B\cdot E=3$,
while $H^1(X,\Omega^1_X\otimes B)\neq 0$.

To see this, let $X$ be a smooth anticanonical divisor in $\P^1\times \P^2$
such that one fiber $E_0$ of the elliptic fibration $\pi\colon X\arrow \P^1$
consists of three lines through a point (thus, a fiber of type IV).
By Theorem \ref{degree4}, $H^1(X,\Omega^1_X\otimes B)\neq 0$,
while $B^2=6m+2$ can be arbitrarily large.
\end{example}

\begin{proof} (Theorem \ref{degree4})
Let $r=B\cdot E$. For $r=1$, the theorem follows from Theorem
\ref{degree1}. From now on, assume that $2\leq r\leq 4$.

We use the following analysis of K3 surfaces of low Clifford genus,
due to Reid, Brawner, and Stevens \cite[section 2.11]{ReidChapters},
\cite[Tables A.1-A.4]{Brawner}, \cite[table in section 1]{Stevens}.

\begin{theorem}
\label{nef}
Let $X$ be a complex K3 surface with a line bundle $L$. Suppose that
there is a curve $E$ in $X$ with $E^2=0$
and $r:=B\cdot E$ between 1 and 4. Suppose that $L+sE$
is ample for some integer $s$. Finally, suppose that $r=1$ and $L^2\geq 2$,
or $r=2$ and $L^2\geq 8$,
or $r=3$ and $L^2\geq 14$,
or $r=4$ and $L^2\geq 26$. Then $L$ is nef,
and $h^0(L)-h^0(L-E)=r$.
\end{theorem}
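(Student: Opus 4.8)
The plan is to prove the two conclusions separately, reducing each to a numerical statement about $(-2)$-curves. Throughout write $r=L\cdot E\in\{1,2,3,4\}$. I will use two standing facts: $E$ is nef (an irreducible curve with $E^2=0$ on a K3 surface is nef), and any irreducible curve $\Gamma$ with $\Gamma^2\geq 0$ lies on the same side of the positive cone as $L$, so $L\cdot\Gamma\geq 0$ because $L^2\geq 2>0$. Hence the only threat to nefness of $L$ is a $(-2)$-curve.

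First I would prove that $L$ is nef. Suppose instead there is a $(-2)$-curve $\Gamma$ with $L\cdot\Gamma=-d$, $d\geq 1$, and set $e=E\cdot\Gamma\geq 0$. Since $L+sE$ is ample, $(L+sE)\cdot\Gamma=-d+se>0$, which forces $e\geq 1$. The Gram matrix of $L,E,\Gamma$ in $\mathrm{NS}(X)$,
$$\begin{pmatrix} L^2 & r & -d \\ r & 0 & e \\ -d & e & -2 \end{pmatrix},$$
has at most one positive eigenvalue because $\mathrm{NS}(X)$ has signature $(1,\rho-1)$; as $L^2>0$ it has signature $(1,2)$ or is degenerate, so its determinant $-L^2e^2+2r^2-2red$ is $\geq 0$. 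Using $e,d\geq 1$ this gives $L^2\leq L^2e^2\leq 2r(r-1)$, i.e.\ $L^2\leq 0,4,12,24$ for $r=1,2,3,4$. Each contradicts the hypothesis $L^2\geq 2,8,14,26$, so no such $\Gamma$ exists and $L$ is nef.

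Next I would reduce the formula $h^0(L)-h^0(L-E)=r$ to a single vanishing statement. Riemann--Roch on the K3 gives $\chi(L)-\chi(L-E)=r$ identically. Since $L$ is nef and big ($L^2>0$), Kawamata--Viehweg vanishing gives $h^1(X,L)=0$, while $h^2(X,L)=h^0(X,-L)=0$ and $h^2(X,L-E)=h^0(X,E-L)=0$ (the class $E-L$ is not effective, since $L$ is big and nef while $(E-L)\cdot E=-r<0$). Therefore $h^0(L)-h^0(L-E)=r-h^1(X,L-E)$, so the asserted formula is equivalent to $h^1(X,L-E)=0$. Equivalently, via the restriction sequence $0\to L-E\to L\to L|_E\to 0$ together with $h^1(X,L)=0$, it says that the map $H^0(X,L)\to H^0(E,L|_E)$ is surjective; here $H^0(E,L|_E)$ has dimension $r$ because $E$ is integral of arithmetic genus $1$ and $L|_E$ has degree $r\geq 1$.

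Finally I would prove $h^1(X,L-E)=0$, and this is where I expect the main obstacle to lie. The divisor $L-E$ is effective (as $\chi(L-E)\geq 2$ and $E-L$ is not effective), with $(L-E)^2=L^2-2r$, which the hypotheses make strictly positive for $r\geq 2$ and nonnegative for $r=1$. If $L-E$ is itself nef it is then big and not a multiple of an elliptic pencil, so $h^1$ vanishes by the standard vanishing for nef line bundles on K3 surfaces \cite{SD}. In general $L-E$ need not be nef: one would take its Zariski decomposition $L-E=P+N$, with $P$ nef and $N$ supported on the $(-2)$-curves meeting $L-E$ negatively, and show that peeling off $N$ one curve at a time changes neither $h^0$ nor $h^1$ — the sequence $0\to D-\Gamma\to D\to D|_\Gamma\to 0$ contributes nothing exactly when the removed curve meets the current divisor in degree $-1$ on $\Gamma\cong\P^1$ — after which $P^2>0$ forces $h^1(P)=0$. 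Controlling this peeling and excluding the degenerate case where the nef part is a multiple of an elliptic pencil is precisely where the numerical bounds are spent: the thresholds $8,14,26$ for $r=2,3,4$ exceed the value $2r(r-1)$ needed merely for nefness, and this slack is consumed here. The borderline case $r=1$, $L^2=2$ (where $(L-E)^2=0$) would be handled separately, as in the unigonal analysis of Theorem \ref{degree1}.
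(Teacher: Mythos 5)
Your argument splits into three steps, and the first two are correct. The Hodge-index computation for nefness is sound: the Gram determinant inequality $-L^2e^2+2r^2-2red\geq 0$ does force $L^2\leq 2r(r-1)$, which the hypotheses exclude, so $L$ is nef. This is a genuinely different (and more self-contained) route than the paper's, which deduces nefness only \emph{after} establishing the dimension formula, by feeding $h^1(X,L-E)=0$ into the Knutsen--Lopez characterization to conclude that $L-E$ has degree at least $-1$ on every $(-2)$-curve. Your reduction of $h^0(L)-h^0(L-E)=r$ to the single vanishing $h^1(X,L-E)=0$, via Riemann--Roch, Kawamata--Viehweg for the nef and big $L$, and the non-effectivity of $E-L$, is also correct and matches what the paper implicitly uses in the reverse direction.

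The gap is the third step: you never prove $h^1(X,L-E)=0$, and that is where the entire content of the theorem and of the specific thresholds lives. You only name a strategy --- Zariski decomposition of $L-E$ and peeling off $(-2)$-curves one at a time --- without carrying it out. The peeling step is not automatic: discarding a curve $\Gamma\cong\P^1$ from an effective divisor $D$ preserves $h^0$ and $h^1$ only when $D\cdot\Gamma=-1$, and nothing in your setup guarantees that the negative part of $L-E$ can be exhausted by such steps; you also do not handle the degenerate case where the positive part is a multiple of an elliptic pencil (which genuinely threatens, e.g., $r=1$, $L^2=2$, where $(L-E)^2=0$). Note moreover that for $r=1,3,4$ the stated thresholds coincide exactly with the smallest even value exceeding $2r(r-1)$, so there is essentially no ``slack'' beyond the nefness bound to spend on this step --- the vanishing has to come from finer information than Hodge index. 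The paper does not prove it from scratch either: it invokes the classification, due to Reid, Brawner, and Stevens, of the possible scroll types $P(O(e_1)\oplus\cdots\oplus O(e_r))$ attached to the pencil $|E|$ --- equivalently, of the possible sequences $h^0(L+mE)$ --- and reads off $h^0(L)-h^0(L-E)=r$ once $L^2$ exceeds the stated bounds. To complete your proof you would need either to reprove that classification for $r\leq 4$ or to make the peeling argument quantitative; as written, the second conclusion of the theorem is unproved.
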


\begin{proof}
The references cited determine the possible
values of the sequence of integers $h^0(L+mE)$. (For $r\geq 2$,
that sequence describes the scroll
$P(O(e_1)\oplus \cdots O(e_r))\arrow \P^1$
that contains the image of $X$ under the morphism to projective space
given by $L+mE$ for $m$ large.) In particular, these results say that
$h^0(L)-h^0(L-E)=r$ under our assumption on $L^2$. It follows that
$h^0(L-E)$ is given by Riemann-Roch and hence that
$h^1(L-E)=0$ (because $h^2(L-E)$ is easily seen to be zero).
By Knutsen and Lopez's characterization of line bundles with
vanishing cohomology on a K3 surface,
it follows that $L-E$ has degree at least $-1$
on any $(-2)$-curve in $X$ \cite[Theorem]{KL}. Using that plus the fact
that $L+mE$ is ample for $m$ large, we deduce that $L$ is nef.
\end{proof}

As in the proof of Theorem \ref{degree1}, let $S$ be the non-smooth
locus of $\pi$, viewed as a closed subscheme of degree 24 in $X$,
supported at the singular points of fibers of $\pi$.
We compute that $S$ has degree 1 at nodes, 2 at cusps (on fibers
of type II),
3 at type III, and 4 at type IV. Moreover, each connected component
of $S$ is contained (as a scheme) in a fiber of $\pi$.

By the proof of Theorem \ref{degree1}, if $H^1(X,\Omega^1_X\otimes B)$
is zero, then $S$ imposes linearly independent conditions
on sections of $B+2E$. Moreover, the converse holds if $B-2E$ is nef
and big. Suppose that $r=2$ and $\pi$ has a fiber of type III, or $r=3$
and $\pi$ has a fiber of type IV. 
(We are assuming $r\geq 2$ now, but the argument would be the same
in the case where
$r=1$ and $\pi$ has a fiber of type II.)
Let $S_0$ be the connected component
of $S$ at the given singular point. Then $S_0$ has degree $r+1$.
On the other hand, the line bundle $B+2E$ is ample and has degree $r$
on the given fiber $E_0$ (which has $r$ irreducible components), and so it
has degree only 1 on each component. It follows that
$h^0(E_0,B+2E)$ is only $r$. So the restriction map
$H^0(X,B+2E)\arrow H^0(S_0,B+2E)=\C^{r+1}$ is not surjective.
So $S$ does not impose independent conditions on sections
of $B+2E$, and hence $H^1(X,\Omega^1_X\otimes B)$ is not zero.
The first part of the theorem is proved.

For the converse, 
suppose that $r=2$ and $B^2\geq 92$, or $r=3$ and $B^2\geq 140$,
or $r=4$ and $B^2\geq 194$. Also, if $r=2$, assume that $\pi$ has no fiber
of type III, and if $r=3$, assume that $\pi$ has no fiber
of type IV. We want to deduce that $H^1(X,\Omega^1_X\otimes B)=0$.

Let $L=B-21E$, so that
$L^2=B^2-42r$. Thus either $r=2$ and $L^2\geq 8$,
or $r=3$ and $L^2\geq 14$, or $r=4$ and $L^2\geq 26$.
By Theorem \ref{nef}, $L$ is nef, and $h^0(L)-h^0(L-E)=r$.
So, for each fiber $E_0$ of $\pi$,
the image of the restriction $H^0(X,L)\arrow H^0(E_0,L)$
has dimension $r$.
Also, we are given that
$L+sE$ is ample, and so $L$ is ample on $E_0$, with degree $r\leq 4$.
It follows that $E_0$ has at most $r$ connected
components. So $E_0$ has type $I_n$ for $n\leq r$ or II or III
(with $r$ equal to 3 or 4) or IV (with $r=4$).
By Riemann-Roch
for 1-dimensional schemes \cite[Tag 0BS6]{Stacks} plus Serre duality,
$H^0(E_0,L)$ has dimension $r$. (Use that
$E_0$ is Gorenstein, with trivial canonical bundle.)
So $H^0(X,L)\arrow H^0(E_0,L)$ is surjective, for each fiber
$E_0$ of $\pi$.

Since $L=B-21E$ is nef and big, so is $B-2E$. Since $B-2E$ is nef and big,
the proof of Theorem \ref{degree1} shows that $H^1(X,\Omega^1_X\otimes B)=0$
(as we want) if and only if $S$ imposes independent conditions on sections
of $B+2E$. Here $B+2E=L+23E$.

Let $E_0$ be any singular fiber of $\pi$, and let $S_0=S\cap E_0$,
which is an open subscheme of $S$. We showed above
that $H^0(X,L)\arrow H^0(E_0,L)$ is surjective. Also, $L$ is ample
on $E_0$. It follows that
$H^0(E_0,L)\arrow H^0(S_0,L)$ is surjective, by inspection
of the possible types of singular fibers (since we have excluded
the case where $r=2$ and $E_0$ is of type III, or $r=3$
and $E_0$ is of type IV).
Therefore, $H^0(X,L)\arrow H^0(S_0,L)$ is surjective.
It is then clear that $H^0(X,L+23E)\arrow H^0(S,L+23E)$ is surjective,
using sections of $O(23E)$ that vanish on all singular fibers of $\pi$
except one. (We are using that the number of singular fibers is at most 24.)
Since $B+2E=L+23E$, this completes the proof that
$H^1(X,\Omega^1_X\otimes B)=0$.
\end{proof}


\small \sc UCLA Mathematics Department, Box 951555,
Los Angeles, CA 90095-1555

totaro@math.ucla.edu
\end{document}